\newtheorem{theorem}{Theorem}
\newtheorem{lemma}{Lemma}
\newtheorem{corollary}{Corollary}
\newtheorem{proposition}{Proposition}
\newtheorem{remark}{Remark}
\theoremstyle{definition}
\newtheorem{definition}{Definition}
\numberwithin{equation}{section} 
\DeclareMathOperator{\Idb}{{\mathbb I}}
\DeclareMathOperator{\Rdb}{{\mathbb R}}
\DeclareMathOperator{\Al}{{\mathcal A}}
\newcommand{\Sp}[1]{\operatorname{Sp}(#1)}
\newcommand{\norm}[1]{\left\Vert#1\right\Vert}
\numberwithin{equation}{section}
\begin{document}
\title[Operator means in JB-algebras]{Operator means in JB-algebras}

\author{Shuzhou Wang}
\address{Department of Mathematics, University of Georgia, Athens, GA, 30602}
\email{szwang@uga.edu}
\author{Zhenhua Wang}
\address{Department of Mathematics, University of Georgia, Athens, GA, 30602}
\email{ezrawang@uga.edu}

\subjclass[2010]{Primary
	47A63,
	47A64,
	46H70, 
	47A60;
	Secondary  
	17C65, 
	81R15,
	81P45,
	94C99}
\keywords{Operator means, JB-algebras, Operator inequalities, Nonassociative perspective function}
\date{}
\begin{abstract}
In this paper, the notion of operator means in the setting of JB-algebras is  
introduced and their properties are studied.  
Many identities and inequalities are established, most of them 
have origins from  operators on Hilbert space 
but they have different forms and connotations, and their proofs require 
techniques in JB-algebras. 
\end{abstract}
\maketitle

\section{Introduction}

Arising in the work of Jordan, von Neumann, and Wigner on the axiomatic foundations of quantum mechanics, finite dimensional Jordan algebras were investigated first in \cite{Jordan1934}. Later the infinite dimensional case were studied by von Neumann in \cite{Neumann1936algebraic}. Jordan subalgebras of selfadjoint part of operators on a Hilbert space were initiated by Segal \cite{Segal1947}, and later studied by Effros and St{\o}rmer \cite{effros1967jordan}, St{\o}rmer \cite{stormer1965, stormer1966, stormer1968} and Topping \cite{topping1965jordan}, etc. The theory of JB-algebras was inaugurated by Alfsen, Shultz, and St{\o}rmer \cite{ALFSEN197811} and later considered by many others. As a motivation for this line of research, 
the observables in a quantum system constitute a JB-algebra, which is non-associative, therefore JB-algebras were considered as natural objects of study for quantum mechanics. 
JB-algebras also have many powerful applications in other fields, such as analysis, geometry, 
operator theory, etc; more information on these can be found in 
\cite{Chu,Upmeier, Upmeier1}.  
 
In a different direction, the theory of operator means started from the notion of parallel addition for two positive matrices introduced by Anderson and Duffin \cite{ANDERSON1969576} as a tool to study electrical network synthesis. Later this notion was extended to positive operators on a Hilbert space by Anderson and Trapp \cite{ANDERSON197560} to solve maximization problem in electrical network theory. On the other hand, in 1975, the geometric mean for two positive operators on Hilbert space was considered by Pusz and Woronowicz \cite{PUSZ1975159}. The general theory of operator means was initiated by Ando \cite{Ando1978topics} and established by Ando \cite{ANDO1979203, ANDO198331}, Kubo and Ando \cite{Kubo1979}, Fujii \cite{fujii1978arithmetico, fujii1979geometric, fujii1979some}, and many others. It turned out operator means, especially arithmetic mean, harmonic mean, and geometric mean have significant impact on operator theory \cite{ANDO1979203}. The interest in operator means and related objects has been exclusively restricted to the context of operators on a Hilbert space. As far as we know, there is no paper in the literature that discusses operator means in JB-algebras. 

In the present paper, we introduce the notion of operator means and study their properties 
in the setting of JB-algebras. These algebras include JC-algebras, therefore selfadjoint operators on a Hilbert space, as a special case, but there are JB-algebras, 
such as the Albert algebra, that are not JC-algebras. Such algebras are nonassociative, therefore 
Hilbert space operator techniques do not apply. 
Many identities and inequalities for JB-algebras are established in this paper. Though most of them 
have origins from operators on Hilbert space, they have different forms and their proofs require 
techniques in JB-algebras.

\section{Preliminaries} 

In this section, we give some background on JB-algebras and fix the notation. 

\begin{definition}\label{DefJa}
A {\bf Jordan algebra} $\Al$ over real number is a vector space $\Al$ over $\Rdb$ equipped with a  bilinear product $\circ$ that satisfies the following identities:
$$a\circ b =b\circ b, \,\ \,\ (a^2\circ b)\circ a=a^2\circ (b\circ a).$$
Any associative algebra $\Al$ has an underlying Jordan algebra structure with Jordan product given by 
$$a\circ b=(ab+ba)/2.$$
Jordan suablgebra of such underlying Jordan algebras is called {\bf special}. 
\end{definition}

As the important example in physics,  
$B(H)_{sa}$, the set of bounded self adjoint operators on a Hilbert space $H$,    
is a special Jordan algebra. Note that $B(H)_{sa}$ is not an associative algebra.

\begin{definition}
A concrete {\bf JC-algebra} $\Al$ is a norm-closed Jordan subalgebra of $B(H)_{sa}$.
\end{definition}
 \begin{definition}\label{DefJB}
 A {\bf JB-algebra} is a Jordan algebra $\Al$ over $\Rdb$ with a complete norm satisfying the following conditions for $A, B\in \Al:$ 
 \begin{align*}
 	\norm{A\circ B}\leq \norm{A}\norm{B},~~\norm{A^2}=\norm{A}^2,~~\mbox{and}~~\norm{A^2}\leq \norm{A^2+B^2}.	
 \end{align*}	
 \end{definition}

A JC-algebra is a JB-algebra, but the converse is not true. 
For example, the Albert algebra is a JB-algebra but not a JC-algebra, cf. \cite[Theorem 4.6]{Alfsen2003}.  

\begin{definition}
Let $\Al$ be a unital JB-algebra. 
We say $A\in \Al$ is {\bf invertible} if there exists $B\in \Al,$ which is called {\bf Jordan inverse} of $A,$ such that  
\begin{align*}
	A\circ B=I \quad \mbox{and}\quad A^2\circ B=A.	
\end{align*}
 The {\bf spectrum} of $A$ is defined by 
 \begin{align*}
 \Sp{A}:=\{\lambda\in \Rdb| A-\lambda I\,\ \text{ is not invertible in} \Al \}.	
 \end{align*}
If $\Sp{A}\subset [0,\infty),$ we say $A$ is {\bf positive}, and write $A \geq 0$. 	
\end{definition}

 \begin{definition}
Let $\Al$ be a unital JB-algebra and $A, B \in \Al$.  We define a map $U_A$ on $\Al$ by
\begin{align}\label{JI}
	U_{A}B:=\{ABA\}:= 2(A\circ B)\circ A -A^2\circ B.
\end{align}
 \end{definition}
It follows from (\ref{JI}) that $U_A$ is linear, in particular, 
\begin{align}\label{LJI}
	U_A(B-C)=\{ABA\}-\{ACA\}.
\end{align} 
Note that $ABA$ is meaningless unless $\Al$ is special, in which case $\{ABA\}=ABA.$ 
The following proposition will be used repeatedly in this paper.

\begin{proposition}\cite[Lemma 1.23-1.25]{Alfsen2003} \label{3inv}
Let $\Al$ be a unital JB-algebra and $A, B$ be two elements in $\Al$.
\begin{enumerate}
	\item If $B$ is positive, then $U_A(B)=\{ABA\}\geq 0.$ 
	\item If $A, B$ are invertible, then $\{ABA\}$ is invertible with inverse $\{A^{-1}B^{-1}A^{-1}\}.$
	\item If $A$ is invertible, then $U_A$ has a bounded inverse $U_{A^{-1}}.$

\end{enumerate}
	
\end{proposition}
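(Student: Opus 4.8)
The plan is to obtain all three items from two standard pieces of Jordan-algebra machinery together with the functional calculus already implicit in the definition of the spectrum. The first is the \emph{fundamental identity}
\[
U_{U_A B}=U_A\,U_B\,U_A\qquad(A,B\in\Al),
\]
valid in every Jordan algebra (Macdonald's theorem). The second is the fact that the norm-closed Jordan subalgebra generated by a single element and $I$ is associative and, via its continuous functional calculus, isometrically a commutative C*-algebra; in particular $A^{-1}$ (when $A$ is invertible) and a square root $B^{1/2}$ (when $B\ge 0$) already lie in $\Al$, and the spectrum of an element is the same whether computed in $\Al$ or in any closed subalgebra containing it. I will also use the special cases $U_I=\mathrm{id}$ and, by setting $B=I$ above and noting $U_A(I)=A^2$, the identity $U_{A^2}=U_A^{2}$.

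For part (1), I would write $B=C^{2}$ with $C:=B^{1/2}\in\Al$, so that $\Sp{C}\subseteq[0,\infty)$. By the Shirshov--Cohn theorem the closed Jordan subalgebra generated by $A$, $C$ and $I$ is a JC-algebra, hence may be realised as a norm-closed Jordan subalgebra of $B(H)_{sa}$ for some Hilbert space $H$. There $\{ACA\}=ACA$, and since $A$ and $C$ are selfadjoint operators,
\[
U_A(B)=\{AC^{2}A\}=AC^{2}A=(AC)(AC)^{*}\ge 0
\]
as a Hilbert space operator. As $\Sp{U_A(B)}$ does not depend on the ambient algebra, we get $\Sp{U_A(B)}\subseteq[0,\infty)$, i.e.\ $U_A(B)\ge 0$ in $\Al$.

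For part (3), a direct computation from the defining relations $A\circ A^{-1}=I$ and $A^{2}\circ A^{-1}=A$ gives $U_A(A^{-1})=2(A\circ A^{-1})\circ A-A^{2}\circ A^{-1}=2I\circ A-A=A$; substituting $B=A^{-1}$ into the fundamental identity then yields $U_A\,U_{A^{-1}}\,U_A=U_{U_A(A^{-1})}=U_A$. Granting the standard fact that $A$ invertible forces the bounded operator $U_A$ to be invertible, cancel $U_A$ on the left and on the right to obtain $U_{A^{-1}}U_A=U_A U_{A^{-1}}=\mathrm{id}$, so $U_{A^{-1}}=U_A^{-1}$, which is bounded because Jordan multiplication is continuous. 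Part (2) then follows: since $A,B$ are invertible, part (3) makes $U_A$ and $U_B$ invertible, hence $U_{U_A B}=U_A U_B U_A$ is invertible, and by the same structure fact $U_A B=\{ABA\}$ is invertible. Applying the fundamental identity and part (3),
\[
U_{U_A B}\bigl(U_{A^{-1}}(B^{-1})\bigr)=U_A U_B U_A U_{A^{-1}}(B^{-1})=U_A\bigl(U_B(B^{-1})\bigr)=U_A(B);
\]
on the other hand the computation of part (3) applied to the invertible element $U_A B$ gives $U_{U_A B}\bigl((U_A B)^{-1}\bigr)=U_A B$ as well, and $U_{U_A B}$ is injective because $U_A B$ is invertible, so $(U_A B)^{-1}=U_{A^{-1}}(B^{-1})$, that is, $\{ABA\}^{-1}=\{A^{-1}B^{-1}A^{-1}\}$.

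The hard part is the single structural input used twice above, namely that invertibility of $A$ is equivalent to invertibility of the operator $U_A$; this rests on the spectral theorem for JB-algebras together with $U_{A^{2}}=U_A^{2}$, which reduces the claim to a positive invertible element and is then handled through its functional calculus. For part (1) the corresponding delicate point is the Shirshov--Cohn reduction to $B(H)_{sa}$ and checking that positivity is not lost upon returning to $\Al$; everything else is formal manipulation of the identity $U_{U_A B}=U_A U_B U_A$.
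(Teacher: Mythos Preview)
The paper does not supply a proof of this proposition at all: it is quoted from Alfsen--Shultz (their Lemmas 1.23--1.25) and used as a black box throughout. There is therefore nothing in the paper to compare your argument against.

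That said, your sketch is largely the standard one. Part~(1) via Shirshov--Cohn and a computation in $B(H)_{sa}$ is exactly the kind of two-generator reduction the paper itself invokes later (see the remark following Proposition~\ref{omnapf}); the transfer of positivity back to $\Al$ is justified because spectra in JB-algebras are unchanged on passing to closed subalgebras. Part~(2) follows cleanly from~(3) and the fundamental identity, as you wrote.

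The one real gap is the one you already flag: the implication ``$A$ invertible $\Rightarrow$ $U_A$ invertible as a bounded linear map on $\Al$'' is asserted, not proved. Your reduction via $U_{A^2}=U_A^{\,2}$ to the case of positive invertible $A$ is fine (bijectivity of $U_A^{\,2}$ forces bijectivity of $U_A$), but the remaining step---showing $U_A$ is invertible for positive invertible $A$---is not handled by ``functional calculus'' alone, since $U_A$ acts on all of $\Al$, not merely on the commutative subalgebra generated by $A$. In Alfsen--Shultz this equivalence (their Lemma~1.24) is established first, and then (2) and~(3) are formal consequences; your write-up would need that lemma, or an explicit verification that $U_A U_{A^{-1}}=\mathrm{id}$, to be complete.
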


For an element $A$ in $\Al$ and a continuous function $f$ on the spectrum of $A$, 
$f(A)$ is defined by functional calculus in JB-algebras (see e.g. \cite[Proposition 1.21]{Alfsen2003}).
\begin{definition}
Let $f$ is a real valued continuous function $f$ on $\Rdb.$ 
\begin{enumerate}
	\item $f$ is said to be 
	{\bf operator monotone (increasing)} on a JB-algebra $\Al$ if 
	$0\leq A\leq B$ implies $f(A)\leq f(B)$.
	\item $f$ is {\bf operator convex} if for any $\lambda \in [0, 1]$ and $A, B\geq 0,$
	$$f((1-\lambda)A+\lambda B)\leq (1-\lambda) f(A)+\lambda f(B).$$
	We say that $f$ is {\bf operator concave} if $-f$ is operator convex.
\end{enumerate}
\end{definition}

\section{Nonassociative perspective function}
In this section, the nonassociative perspective function is introduced and some of its properties are also discussed. 
\begin{definition}
Let $f$ and $h$ be real continuous function on a closed interval $\Idb$ with $h>0$ and let $A, B$ be two elements in a unital JB-algebra $\Al$ with spectra contained in $\Idb$ and $\Sp{\{h(B)^{-\frac{1}{2}}Ah(B)^{-\frac{1}{2}}\}}\subset \Idb$. The {\bf nonassociative perspective function}
$P_{f\triangle h}(A,B)$ of two variables $A$ and $B$ associated to $f$ and $h$ is defined by
\begin{align}\label{dnapf}
	P_{f\triangle h}(A,B)
	=\left\{h(B)^{\frac{1}{2}}f\left(\{h(B)^{-\frac{1}{2}}Ah(B)^{-\frac{1}{2}}\}\right)h(B)^{\frac{1}{2}} \right\}.	
\end{align}	
\end{definition}
\begin{remark} Though $\Al$ is commutative under the nonassociative product $\circ$, if $\Al$ is special, then our notion is precisely the noncommutative perspective we introduced in \cite{Wang2020}, which is a generalization of  \cite{ebadian2011perspectives} for noncommuative associative case. For commutative and associative case, 
	the notion was initiated by Effros \cite{effros2009matrix}, in which Effros presented a simple approach to the famous Lieb's concavity theorem \cite{LIEB1973267, Lieb19731938}.
\end{remark}

\begin{proposition}\label{omnapf}
	Let $f$ and $h$ be real continuous function on a closed interval $\Idb$ with $f$ operator monotone and $h>0$. For positive elements $A, B$ with $A\leq B$ and an element $C$ in a unital JB-algebra $\Al$ such that the spectra of $C,$ $\{h(C)^{-1/2}Ah(C)^{-1/2}\},$ and $\{h(C)^{-1/2}B h(C)^{-1/2}\}$ are contained in $\Idb$, 
	\begin{align}
		P_{f\triangle h}(A, C)\leq P_{f\triangle h}(B, C).	
	\end{align}
\end{proposition}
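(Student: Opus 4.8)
The plan is to reduce the statement to the monotonicity of $f$ applied to suitable elements, after conjugating by $h(C)^{1/2}$. Write $X = \{h(C)^{-1/2} A h(C)^{-1/2}\}$ and $Y = \{h(C)^{-1/2} B h(C)^{-1/2}\}$, both of which are positive by part (1) of Proposition \ref{3inv} and have spectra in $\Idb$ by hypothesis. Then by definition
\[
P_{f\triangle h}(A,C) = \bigl\{ h(C)^{1/2} f(X) h(C)^{1/2} \bigr\}, \qquad P_{f\triangle h}(B,C) = \bigl\{ h(C)^{1/2} f(Y) h(C)^{1/2} \bigr\}.
\]
The key reduction is the claim $X \leq Y$. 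Granting this, operator monotonicity of $f$ gives $f(X) \leq f(Y)$, hence $f(Y) - f(X) \geq 0$; applying part (1) of Proposition \ref{3inv} with $h(C)^{1/2}$ in place of $A$ and using linearity of $U_{h(C)^{1/2}}$ as recorded in (\ref{LJI}), we conclude $\bigl\{ h(C)^{1/2}(f(Y)-f(X)) h(C)^{1/2}\bigr\} \geq 0$, i.e.\ $P_{f\triangle h}(A,C) \leq P_{f\triangle h}(B,C)$.

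So everything rests on showing that $A \leq B$ implies $\{h(C)^{-1/2} A h(C)^{-1/2}\} \leq \{h(C)^{-1/2} B h(C)^{-1/2}\}$; that is, the map $U_{h(C)^{-1/2}}$ is positivity-preserving on differences. Since $h(C)^{-1/2}$ is a well-defined positive invertible element (as $h > 0$ forces $\Sp{h(C)} \subset (0,\infty)$, and functional calculus is available), $B - A \geq 0$ gives $\{h(C)^{-1/2}(B-A)h(C)^{-1/2}\} \geq 0$ again by part (1) of Proposition \ref{3inv}. Combined with linearity (\ref{LJI}) of $U_{h(C)^{-1/2}}$, this yields $\{h(C)^{-1/2} B h(C)^{-1/2}\} - \{h(C)^{-1/2} A h(C)^{-1/2}\} \geq 0$, which is exactly $X \leq Y$.

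I expect the main subtlety to be bookkeeping around the functional calculus and the hypotheses rather than any deep step: one must make sure that $f(X)$, $f(Y)$ are defined (spectra of $X, Y$ in $\Idb$, which is assumed), that $f(Y) - f(X)$ is a legitimate element to feed into $U_{h(C)^{1/2}}$, and that operator monotonicity of $f$ is being invoked on a genuine unital JB-subalgebra containing $X$ and $Y$ — or, more carefully, that the definition of operator monotone function on $\Al$ applies directly to the pair $X \leq Y$ in $\Al$. The only genuinely nonassociative ingredients are the identity (\ref{LJI}) and the positivity statement in Proposition \ref{3inv}(1), both of which are quoted and available; no Hilbert space structure is used. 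Thus the proof is essentially a two-fold application of $U$-operator positivity sandwiched around the monotonicity of $f$.
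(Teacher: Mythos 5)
Your proof is correct and follows essentially the same route as the paper's: use the positivity of the $U$-map (Proposition \ref{3inv}(1)) together with linearity (\ref{LJI}) to get $\{h(C)^{-1/2}Ah(C)^{-1/2}\}\leq\{h(C)^{-1/2}Bh(C)^{-1/2}\}$, apply operator monotonicity of $f$, and conjugate back by $h(C)^{1/2}$. Your write-up merely makes explicit the two applications of $U$-positivity that the paper leaves implicit.
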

Since $A\leq B,$ then by Proposition \ref{3inv}
\begin{align*}
	\{h(C)^{-\frac{1}{2}}Ah(C)^{-\frac{1}{2}}\}\leq 	\{h(C)^{-\frac{1}{2}}Bh(C)^{-\frac{1}{2}}\}.
\end{align*}
If $f$ is operator monotone, then 
\begin{align*}
	f\left(\{h(C)^{-\frac{1}{2}}Ah(C)^{-\frac{1}{2}}\}\right)\leq 	f\left(\{h(C)^{-\frac{1}{2}}Bh(C)^{-\frac{1}{2}}\}\right).
\end{align*}
Finally,
\begin{align*}
	P_{f\triangle h}(A, C)\leq P_{f\triangle h}(B, C).	
\end{align*}

\begin{remark}
By Shirshov-Cohen theorem for JB-algebras \cite[Theorem 7.2.2]{hanche1984jordan}, the JB-subalgebra generated by $A, C$ is a JC-algebra, so is the JB-subalgebra generated by $B, C.$ However, one cannot use this fact 
in the proof of Proposition \ref{omnapf} by reducing to JC-algebras since the JB-subalgebra generated $A, B, C$ usually is not a JC-algebra. The same situation repeatedly occurs in the rest of paper.  

On the other hand, a few results contained in this paper can be obtained by reducing to JC-algebras, but for consistency, we will only use techniques for JB-algebras.  
\end{remark}

\begin{proposition}\label{ocvnapf}
Let $f$ and $h$ be real continuous functions on $[0, \infty)$ with $f$ operator convex and $h>0$. Assume that $A_k \geq 0,  B$ are elements in a unital JB-algebra $\Al$, $k=1, 2$.  
 Then for any $0\leq \lambda \leq 1,$ 
\begin{align}
P_{f\triangle h}(\lambda A_1+(1-\lambda)A_2, B)\leq \lambda P_{f\triangle h}( A_1, B)+(1-\lambda)P_{f\triangle h}( A_2, B).	
\end{align}
\end{proposition}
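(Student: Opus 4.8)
The plan is to reduce the statement to the defining property of operator convexity for $f$, after passing through the $U$-operator conjugation. Write $C = h(B)^{-1/2}$ in the loose sense (meaning $h(B)$ is positive and invertible since $h>0$ on a compact spectrum, so $h(B)^{-1/2}$ makes sense via functional calculus), and set $X_k = \{h(B)^{-1/2}A_k h(B)^{-1/2}\} = U_{h(B)^{-1/2}}(A_k)$ for $k=1,2$. By Proposition \ref{3inv}(1) each $X_k \geq 0$, and by hypothesis their spectra lie in $[0,\infty)$. The first key step is linearity: since $U_{h(B)^{-1/2}}$ is a linear map (equation (\ref{JI})), we have
\begin{align*}
U_{h(B)^{-1/2}}(\lambda A_1 + (1-\lambda)A_2) = \lambda X_1 + (1-\lambda)X_2.
\end{align*}

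The second step is to apply operator convexity of $f$ in the JB-algebra to the positive elements $X_1, X_2$: for $0 \leq \lambda \leq 1$,
\begin{align*}
f(\lambda X_1 + (1-\lambda)X_2) \leq \lambda f(X_1) + (1-\lambda) f(X_2).
\end{align*}
Here I should note that operator convexity of a continuous function is usually stated for self-adjoint operators on Hilbert space, but by the Shirshov–Cohen theorem the JB-subalgebra generated by $X_1, X_2$ together with $I$ is a JC-algebra (special), so the usual notion of operator convexity does apply to the pair $X_1, X_2$ — and one has to check that the functional calculus and the order in this JC-subalgebra agree with those in $\Al$, which follows from \cite[Proposition 1.21]{Alfsen2003}. (Alternatively, if the paper intends ``operator convex on a JB-algebra'' as a standing hypothesis applying to all positive pairs, this step is immediate.)

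The third step is to conjugate the inequality by $U_{h(B)^{1/2}}$. By Proposition \ref{3inv}(1), $T \geq 0$ implies $U_{h(B)^{1/2}}(T) \geq 0$; equivalently, applying this to a difference and using the linearity identity (\ref{LJI}), $U_{h(B)^{1/2}}$ is order-preserving. Applying it to the operator-convexity inequality and using linearity again gives
\begin{align*}
\left\{h(B)^{1/2} f(\lambda X_1 + (1-\lambda)X_2) h(B)^{1/2}\right\} \leq \lambda \left\{h(B)^{1/2} f(X_1) h(B)^{1/2}\right\} + (1-\lambda)\left\{h(B)^{1/2} f(X_2) h(B)^{1/2}\right\}.
\end{align*}
By the linearity step of the first paragraph, $f(\lambda X_1 + (1-\lambda)X_2) = f\bigl(U_{h(B)^{1/2}}^{-1}$ applied to $\dots\bigr)$ — more precisely the left side is exactly $P_{f\triangle h}(\lambda A_1 + (1-\lambda)A_2, B)$, while the right side is $\lambda P_{f\triangle h}(A_1,B) + (1-\lambda)P_{f\triangle h}(A_2,B)$, and we are done.

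I expect the main obstacle to be the middle step: justifying that ordinary operator convexity of $f$ (in the Hilbert-space sense, or as defined for JB-algebras in the excerpt) transfers to the specific pair $X_1, X_2$ inside $\Al$. The clean route is the Shirshov–Cohen reduction to a JC-subalgebra containing $X_1, X_2$, which is legitimate here precisely because — unlike in Proposition \ref{omnapf}, where three elements $A,B,C$ were involved — the inequality we need only concerns the two elements $X_1, X_2$ and $I$, and two elements always generate a special Jordan algebra. One must still verify that restricting to this JC-subalgebra does not change the values of $f(X_k)$ or the partial order, but both are standard consequences of the uniqueness of the continuous functional calculus in JB-algebras. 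Everything else is bookkeeping with the linearity of $U$ and the positivity statements in Proposition \ref{3inv}.
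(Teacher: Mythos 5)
Your proposal is correct and follows essentially the same route as the paper: linearity of $U_{h(B)^{-1/2}}$, the defining inequality of operator convexity applied to the positive elements $\{h(B)^{-1/2}A_k h(B)^{-1/2}\}$, and order-preservation of $U_{h(B)^{1/2}}$ via Proposition \ref{3inv}. The Shirshov--Cohen detour in your middle step is unnecessary, since the paper's definition of operator convexity is already stated intrinsically for the JB-algebra $\Al$, so (as you yourself note) that step is immediate by hypothesis.
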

\begin{proof}
Note that 
\begin{align*}
\left\{h(B)^{-\frac{1}{2}}\left(\lambda A_1+(1-\lambda)A_2\right)h(B)^{-\frac{1}{2}}\right\}=&\lambda \left\{h(B)^{-\frac{1}{2}}A_1h(B)^{-\frac{1}{2}}\right\}\\
&+(1-\lambda) \left\{h(B)^{-\frac{1}{2}}A_2h(B)^{-\frac{1}{2}}\right\}	.
\end{align*}
By the operator convexity of $f$,  
\begin{align*}
f\left(\left\{h(B)^{-\frac{1}{2}}\left(\lambda A_1+(1-\lambda)A_2\right)h(B)^{-\frac{1}{2}}\right\}\right)\leq& \lambda f\left(\left\{h(B)^{-\frac{1}{2}}A_1h(B)^{-\frac{1}{2}}\right\}\right)\\
&+(1-\lambda) f\left(\left\{h(B)^{-\frac{1}{2}}A_2h(B)^{-\frac{1}{2}}\right\}\right).
\end{align*}
Finally, by Proposition \ref{3inv},
\begin{align*}
P_{f\triangle h}(\lambda A_1+(1-\lambda)A_2, B)\leq \lambda P_{f\triangle h}( A_1, B)+(1-\lambda)P_{f\triangle h}( A_2, B).	
\end{align*}
\end{proof}
\begin{theorem}
	\label{tnapf} 
	Let $\Al$ be a unital JB-algebra.	
	Let $r, q$ and $h$ be real valued continuous functions on a closed interval $\Idb$ such that $h>0$ 
	and $r(x)\leq q(x)$. For elements $A$ 
	and $B$ in $\Al$ such that the spectra of $B$ and $\{h(B)^{-1/2}Ah(B)^{-1/2}\}$ are contained in $\Idb$, 
	\begin{equation}\label{inapf} 
		P_{r\triangle h}(A,B)\leq P_{q\triangle h}(A, B).
	\end{equation}
\end{theorem}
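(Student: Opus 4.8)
The plan is to reduce inequality \eqref{inapf} to a pointwise inequality between functional calculi, then transport it through the operator $U_{h(B)^{1/2}}$ using the positivity statement in Proposition \ref{3inv}(1). First I would set $X:=\{h(B)^{-1/2}Ah(B)^{-1/2}\}$, which by hypothesis is an element of $\Al$ with $\Sp{X}\subset\Idb$. Since $r$ and $q$ are continuous on $\Idb$ with $r(x)\le q(x)$ for all $x\in\Idb$, the function $g:=q-r$ is continuous and nonnegative on $\Idb\supseteq\Sp{X}$, so by the functional calculus in JB-algebras $g(X)\ge 0$, i.e. $r(X)\le q(X)$. This is the only place the ordering hypothesis on $r,q$ enters.

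Next I would apply the operator $U_{h(B)^{1/2}}$ to both sides. The element $h(B)^{1/2}$ lies in $\Al$ and is positive (again by functional calculus, since $h>0$ on $\Idb\supseteq\Sp{B}$), so $U_{h(B)^{1/2}}$ maps positive elements to positive elements by Proposition \ref{3inv}(1). Applying it to $q(X)-r(X)\ge 0$ and using linearity of $U_A$ from \eqref{LJI}, we get
\begin{align*}
U_{h(B)^{1/2}}\bigl(q(X)\bigr)-U_{h(B)^{1/2}}\bigl(r(X)\bigr)=U_{h(B)^{1/2}}\bigl(q(X)-r(X)\bigr)\ge 0,
\end{align*}
that is, $\{h(B)^{1/2}\,r(X)\,h(B)^{1/2}\}\le\{h(B)^{1/2}\,q(X)\,h(B)^{1/2}\}$. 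By the definition \eqref{dnapf} of the nonassociative perspective function, the left-hand side is exactly $P_{r\triangle h}(A,B)$ and the right-hand side is $P_{q\triangle h}(A,B)$, which is the desired inequality.

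This argument is short and essentially formal once the ingredients are lined up. I do not expect a genuine obstacle here; the only points requiring minor care are (i) confirming that $\Sp{X}\subset\Idb$ so that $r(X)$ and $q(X)$ are well defined—this is part of the hypothesis—and (ii) noting that $r(X)$ and $q(X)$, being functions of the single element $X$, live in the associative (indeed commutative) JB-subalgebra generated by $X$, so $r(X)\le q(X)$ follows from the scalar inequality $r\le q$ on the spectrum without any nonassociativity issue. The transport step then uses only the two structural facts about $U$ already recorded in the excerpt: linearity \eqref{LJI} and positivity Proposition \ref{3inv}(1). Hence the proof requires no reduction to JC-algebras and no Hilbert space technique.
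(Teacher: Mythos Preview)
Your proof is correct and follows essentially the same approach as the paper: obtain $r(X)\le q(X)$ from the pointwise inequality via functional calculus, then apply $U_{h(B)^{1/2}}$ using its positivity (Proposition~\ref{3inv}(1)) and linearity \eqref{LJI} to conclude. Your write-up is in fact slightly more careful than the paper's, which contains a harmless $A\leftrightarrow B$ typo in the displayed step.
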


\begin{proof}
	Since the functional calculus is
	order preserving, cf. \cite{Alfsen2003}, 
	$$ r\left(\{h(B)^{-\frac{1}{2}}Ah(B)^{-\frac{1}{2}}\}\right)\leq q\left(\{h(B)^{-\frac{1}{2}}Ah(B)^{-\frac{1}{2}}\}\right).$$ 
	 It implies that 
	\begin{align}\label{inapfs1}
		&\left\{h(A)^{\frac{1}{2}}\left[(q-r)\left(\{h(A)^{-\frac{1}{2}}Bh(A)^{-\frac{1}{2}}\}\right)\right]h(A)^{\frac{1}{2}} \right\}\geq 0
	\end{align}  

	Combing (\ref{LJI}), (\ref{dnapf})and (\ref{inapfs1}) , we have
	\begin{align*}
		P_{r\triangle h}(B, A)\leq P_{q\triangle h}(B, A). 
	\end{align*}
\end{proof}

\begin{remark}
	In later sections, we do not explicitly mention interval $\Idb$ because the relevant functions 
	are continuous on $(0, \infty)$. 
\end{remark}

\section{Operator concavity and monotonicity}

\begin{definition} \label{RiemannInt}
Let $f_\alpha(x)$ be a real valued function on 
$(\alpha, x)  \in (0, \infty) \times [0, \infty)$ 
that is separately continuous with respect to  $\alpha$ and $x$.   
We say $f_\alpha(x)$ is {\bf uniformly Riemann integrable} on $ \alpha  \in (0, \infty)$ for $x$ on 
bounded and closed intervals if for any closed interval $[0, M] \subset [0, \infty)$, and   
$\varepsilon > 0$, there exist $1 > \delta> 0$ and $N>1$, such that for any positive numbers   
$\delta_1, \delta_2 \leq  \delta$, $N_1, N_2 \geq N$, partitions 
$\Delta_\beta $, $\Delta_\gamma$ of $[\delta, N]$ with 
$\max_{k ,l}\left\{|\Delta \beta_k|, |\Delta \gamma_l|\right\}<\delta $, and 
$\text{for all} \; \;  x \in [0,M],$
we have 
\begin{align}
&\left|\int_{\delta_1}^{\delta_2} f_\alpha(x) d \alpha \right| < \varepsilon/3, \; \; \; 
\left|\int_{N_1}^{N_2} f_\alpha(x) d \alpha \right| < \varepsilon/3, \; \; \;\\
&\left|\sum_k f_{\beta^*_k}(x) \Delta \beta_k  - 
\sum_l f_{\gamma^*_l}(x) \Delta \gamma_l  \right| < \varepsilon/3,  \label{Riemm2}\; \; \; 	
\end{align}
where $\beta^*_k \in [\beta_{k-1}, \beta_{k}]$ and 
$\gamma^*_l \in [\gamma_{l-1}, \gamma_{l}]$ are arbitrary.

\end{definition}

\begin{lemma} \label{intocanv}
Let $f_\alpha(x)$ be  a family of operator concave functions on a unital JB-algebra $\Al$ 
indexed by $\alpha$ in $(0, \infty)$. Assume $f_\alpha(x)$ is
 uniformly Riemann integrable on $ \alpha  \in (0, \infty)$ for $x$ on 
 bounded and closed intervals. 
Then $\displaystyle \int_0^\infty f_\alpha(x) d \alpha$ is also operator concave. 
\end{lemma}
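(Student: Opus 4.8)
The plan is to reduce the integral statement to a pointwise limit of Riemann sums and then invoke the fact that a finite convex (here, concave) combination of operator concave functions is operator concave, together with the fact that a pointwise limit of operator concave functions is operator concave. Concretely, fix positive elements $A, B$ in $\Al$ with spectra in a closed interval, fix $\lambda \in [0,1]$, and set $g(x) = \int_0^\infty f_\alpha(x)\, d\alpha$. The goal is the inequality $g((1-\lambda)A + \lambda B) \leq (1-\lambda) g(A) + \lambda g(B)$ in $\Al$.

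First I would make sense of $g$ as a well-defined continuous function on $[0,\infty)$: the two tail estimates in Definition \ref{RiemannInt} show that $\int_{\delta_1}^{\delta_2} f_\alpha(x)\,d\alpha$ and $\int_{N_1}^{N_2} f_\alpha(x)\,d\alpha$ are uniformly small for $x$ in any $[0,M]$, while the Cauchy criterion on Riemann sums in \eqref{Riemm2} shows that $\int_\delta^N f_\alpha(x)\, d\alpha$ exists and, with the tails, that the net of truncated integrals $\int_{\delta}^{N} f_\alpha(x)\, d\alpha$ converges uniformly on $[0,M]$ to $g(x)$ as $\delta \to 0$, $N\to\infty$; uniform convergence of continuous functions gives continuity of $g$. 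Next, for fixed $\delta, N$, approximate $\int_\delta^N f_\alpha(x)\,d\alpha$ by Riemann sums $S_\Delta(x) = \sum_k f_{\beta_k^*}(x)\,\Delta\beta_k$ over partitions of $[\delta, N]$; \eqref{Riemm2} guarantees these sums converge uniformly in $x \in [0,M]$ as the mesh shrinks. Each $S_\Delta$ is a nonnegative linear combination of operator concave functions, hence operator concave on $\Al$ (here I use that $\Delta\beta_k > 0$ and that operator concavity, being defined by an inequality preserved under positive scaling and addition, is closed under finite nonnegative combinations).

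The key analytic step is then to pass operator concavity through the two successive uniform limits. For a single operator concave function the defining inequality is an inequality between the elements $\phi(x)\big((1-\lambda)A+\lambda B\big)$ and $(1-\lambda)\phi(A) + \lambda\phi(B)$; since functional calculus in a JB-algebra is norm-continuous in the function uniformly on the (compact) joint spectrum — i.e. if $\phi_n \to \phi$ uniformly on $\Sp{C}$ then $\phi_n(C) \to \phi(C)$ in norm — uniform convergence $S_\Delta \to \int_\delta^N f_\alpha\,d\alpha$ on a common $[0,M]$ containing $\Sp{(1-\lambda)A+\lambda B}$, $\Sp{A}$, $\Sp{B}$ forces $S_\Delta(C) \to \big(\int_\delta^N f_\alpha\,d\alpha\big)(C)$ in norm for $C \in \{(1-\lambda)A+\lambda B,\, A,\, B\}$. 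Because the positive cone of a JB-algebra is norm-closed, the operator-concavity inequality survives this limit, so $\int_\delta^N f_\alpha(x)\,d\alpha$ is operator concave; repeating the argument with the limit $\delta\to 0$, $N\to\infty$ (again uniform on $[0,M]$) yields operator concavity of $g$.

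The main obstacle I anticipate is purely bookkeeping rather than conceptual: one must be careful that a single bounded closed interval $[0,M]$ can be chosen to contain the spectra of all three elements $(1-\lambda)A + \lambda B$, $A$, and $B$ simultaneously (this is immediate since each spectrum is compact and lies in $[0,\infty)$ by positivity), and that the uniform Riemann integrability hypothesis is exactly what licenses the uniform-on-$[0,M]$ convergence needed to move functional calculus past the limits. A secondary point worth stating explicitly is that the argument never needs any associativity or any reduction to JC-algebras: it rests only on the norm-continuity of the JB-functional calculus in the functional variable and on norm-closedness of the positive cone, both of which hold in every JB-algebra.
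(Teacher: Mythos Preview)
Your proposal is correct and follows essentially the same route as the paper's proof: approximate $\int_0^\infty f_\alpha(x)\,d\alpha$ uniformly on compact $x$-intervals by Riemann sums, observe that each Riemann sum is a nonnegative linear combination of operator concave functions and hence operator concave, and then pass concavity through the uniform limit using norm-continuity of the JB functional calculus and norm-closedness of the positive cone. The paper compresses the two-stage limit (truncate to $[\delta,N]$, then refine the partition) into a single sequence of equisubdivisions converging uniformly, and simply asserts as ``facts'' the two closure properties you take care to justify; your version is more explicit but not different in substance. One minor slip: the displayed target inequality should read $g\big((1-\lambda)A+\lambda B\big)\geq (1-\lambda)g(A)+\lambda g(B)$, since the statement concerns concavity, not convexity.
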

	
\begin{proof} For positive number $M>0$, it can be deduced from definition \ref{RiemannInt} that 
		there exist sequences $\delta_n \to 0$,  $ N_n \to \infty$ 
		and a sequence of equisubdivisions 
		\begin{align*}
			\delta_n = \alpha_{0(n)} <\alpha_{1(n)} <\cdots< \alpha_{m(n)}= N_n ,	
		\end{align*}
		of $[\delta_n, N_n]$ such that

	\begin{align}
	\int_0^\infty f_\alpha(x) d \alpha  = \lim_{n \rightarrow \infty } 
	\sum_{k=1}^{m(n)} f_{\alpha_{n_k}}(x)  \dfrac{N_n - \delta_n }{m(n)}	\; \; \ \text{uniformly on} \; x
	 \in [0, M]. \label{intunicon}
	\end{align}

The lemma follows from the following facts: (1)	
Linear combinations of operator concave functions with positive coefficients are operator concave. 
(2) The limit of such linear combinations is also operator concave.  
\end{proof}

\begin{remark} \label{rmkintunif}
When $x$ in Lemma {\rm \ref{intocanv}} 
is substituted by an element $A$ of a JB-algebra, 
the convergence 
in {\rm (\ref{intunicon})} 
is in norm by functional calculus for JB-algebras, \cite[Proposition 1.21]{Alfsen2003}.

\end{remark}
\begin{proposition} \label{LJocanv}
Let $\Al$ be a unital JB-algebra. The functions $x\mapsto x^{\lambda},$ $\lambda\in[0, 1],$ and $x\mapsto \log(x)$ are all operator concave on $(0,+\infty).$ 	
\end{proposition}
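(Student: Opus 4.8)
The plan is to establish each claim using integral representations that reduce matters to the elementary operator concavity of $x \mapsto -(x+\alpha)^{-1}$ for $\alpha > 0$, combined with Lemma \ref{intocanv}. First I would record the base case: for $\alpha > 0$ the function $g_\alpha(x) = -(x+\alpha)^{-1}$ is operator concave on $(0,\infty)$ in any unital JB-algebra. This follows because $g_\alpha$ is operator monotone (if $0 \le A \le B$ then, working in the associative JB-subalgebra generated by $A$ or by $B$ as needed, or directly via Proposition \ref{3inv} applied to the resolvents, one gets $(A+\alpha I)^{-1} \ge (B+\alpha I)^{-1}$), and operator monotone functions that are also concave as scalar functions and negative are operator concave; alternatively one verifies midpoint operator concavity of $g_\alpha$ by the standard resolvent identity manipulation, which only uses the order properties of $U_A$ from Proposition \ref{3inv} and continuity then upgrades midpoint concavity to full concavity.

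Next, for $\lambda \in (0,1)$ I would use the classical integral formula
\begin{align*}
x^\lambda = c_\lambda \int_0^\infty \frac{x}{x+\alpha}\,\alpha^{\lambda-1}\,d\alpha, \qquad c_\lambda = \frac{\sin(\lambda\pi)}{\pi},
\end{align*}
valid for $x > 0$, and rewrite $\frac{x}{x+\alpha} = 1 - \alpha(x+\alpha)^{-1}$ so that the integrand is, up to the positive weight $c_\lambda \alpha^{\lambda-1}$, the sum of the affine function $1$ and the operator concave function $\alpha g_\alpha(x)$. Thus $f_\alpha(x) = c_\lambda \alpha^{\lambda - 1}\bigl(1 - \alpha(x+\alpha)^{-1}\bigr)$ is a family of operator concave functions on $(0,\infty)$. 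Then I would verify the uniform Riemann integrability hypothesis of Lemma \ref{intocanv}: on $x \in [0,M]$ one has the pointwise bounds $0 \le \frac{x}{x+\alpha} \le \min\{1, M/\alpha\}$ and $\frac{x}{x+\alpha}\alpha^{\lambda-1} \le \alpha^{\lambda-1}$ near $\alpha = 0$ and $\le M\alpha^{\lambda - 2}$ near $\alpha = \infty$, both integrable since $0 < \lambda < 1$; these bounds are uniform in $x \in [0,M]$, which is exactly what is needed to control the tail integrals $\int_0^{\delta_2}$, $\int_{N_1}^\infty$ and the Riemann-sum difference in Definition \ref{RiemannInt}. Lemma \ref{intocanv} (together with Remark \ref{rmkintunif}) then yields that $x \mapsto x^\lambda = \int_0^\infty f_\alpha(x)\,d\alpha$ is operator concave. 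The endpoint cases $\lambda = 0$ (constant) and $\lambda = 1$ (identity) are trivially operator concave.

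For $x \mapsto \log x$ I would use the analogous representation
\begin{align*}
\log x = \int_0^\infty \left(\frac{1}{1+\alpha} - \frac{1}{x+\alpha}\right) d\alpha = \int_0^\infty \left(\frac{1}{1+\alpha} + g_\alpha(x)\right) d\alpha,
\end{align*}
so the integrand $f_\alpha(x) = (1+\alpha)^{-1} + g_\alpha(x)$ is again a constant plus an operator concave function, hence operator concave; the bracketed quantity equals $\frac{x-1}{(1+\alpha)(x+\alpha)}$, which for $x \in [0,M]$ is dominated in absolute value by $(M+1)(1+\alpha)^{-1}(x+\alpha)^{-1} \le (M+1)(1+\alpha)^{-2}$ near $\alpha=\infty$ and stays bounded near $\alpha = 0$, giving the uniform Riemann integrability needed to invoke Lemma \ref{intocanv} once more. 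I expect the main obstacle to be the base-case operator concavity of the resolvent $g_\alpha$ in the genuinely nonassociative setting: one must avoid any appeal to Hilbert-space operator calculus and instead derive the resolvent inequality and midpoint concavity purely from the $U_A$-calculus of Proposition \ref{3inv} and JB-algebra functional calculus, the point being precisely that the subalgebra generated by two comparable elements need not be special (as emphasized in the remark after Proposition \ref{omnapf}).
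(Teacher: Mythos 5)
Your overall strategy coincides with the paper's: represent $x^\lambda$ and $\log x$ as integrals of resolvent-type integrands, reduce operator concavity of each integrand to operator convexity of the inverse, and pass to the integral via Lemma \ref{intocanv}. Your $\log$ representation is identical to the paper's, and your power-function integrand $c_\lambda\alpha^{\lambda-1}x(x+\alpha)^{-1}$ is exactly formula (\ref{powerf0}) before the paper's change of variable $t=1/\alpha$; both decompositions exhibit the integrand as an affine function plus a positive multiple of $-(x+\alpha)^{-1}$, so they are interchangeable. Your tail estimates for the uniform Riemann integrability are the same as the paper's; note only that the Riemann-sum condition (\ref{Riemm2}) on a compact $\alpha$-interval is not a consequence of the integrable majorants alone but of continuity of $\alpha\mapsto f_\alpha(\cdot)$ as a $C([0,M])$-valued map, which the paper verifies explicitly (and which is immediate for these rational integrands).

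The one genuine gap is the base case, which you rightly flag as the main obstacle but do not settle. Your first suggested route --- that an operator monotone, scalar-concave, negative function is operator concave --- is not available: even for Hilbert space operators the implication ``operator monotone on $(0,\infty)$ implies operator concave'' rests on L\"owner's integral representation, and no such result has been established for JB-algebras at this point in the paper (indeed the paper derives monotonicity of $x^\lambda$ and $\log x$ \emph{after} concavity, in Proposition \ref{opmpowerlog}, by the same integral device). Your second route is the right one, and it is precisely what Theorem \ref{tnapf} packages: from the scalar inequality $(s+(1-s)x)^{-1}\le s+(1-s)x^{-1}$ and the identities
\begin{align*}
(sA+(1-s)B)^{-1} &= \left\{A^{-\frac{1}{2}}\left(sI+(1-s)\{A^{-\frac{1}{2}}BA^{-\frac{1}{2}}\}\right)^{-1}A^{-\frac{1}{2}}\right\},\\
sA^{-1}+(1-s)B^{-1} &= \left\{A^{-\frac{1}{2}}\left(sI+(1-s)\{A^{-\frac{1}{2}}BA^{-\frac{1}{2}}\}^{-1}\right)A^{-\frac{1}{2}}\right\},
\end{align*}
both consequences of Proposition \ref{3inv}, one obtains operator convexity of $x^{-1}$, hence of every resolvent $x\mapsto(x+\alpha)^{-1}$ by affine substitution, with no appeal to special subalgebras. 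Once you make that step explicit, your argument is complete and is essentially the paper's proof.
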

\begin{proof}
For each $s\in[0,1],$
\begin{align}\label{hai}
\left(s 1+(1-s) x\right)^{-1}\leq s 1+(1-s)x^{-1}.	
\end{align}
holds for any $x>0.$
Apply Theorem \ref{tnapf} to (\ref{hai}) with $h(t)=t,$ we derive that for positive invertible elements $A,$ $B$ in $\Al,$ and any $s\in[0,1],$ 
\begin{align*}
\left\{A^{-\frac{1}{2}}\left(sI+(1-s)\{A^{-\frac{1}{2}}B A^{-\frac{1}{2}}\}\right)^{-1}A^{-\frac{1}{2}}\right\}
\leq \left\{A^{-\frac{1}{2}} 
\left (sI+(1-s)\{A^{-\frac{1}{2}}B A^{-\frac{1}{2}}\}^{-1}\right)A^{-\frac{1}{2}}\right\}.	
\end{align*}
By Proposition \ref{3inv}, one has
\begin{align*}
\left(s A+(1-s) B\right)^{-1}&=\left\{A^{-\frac{1}{2}}\left(sI+(1-s)\{A^{-\frac{1}{2}}B A^{-\frac{1}{2}}\}\right)^{-1}A^{-\frac{1}{2}}\right\},\\
s A^{-1}+(1-s)B^{-1}&=\left\{A^{-\frac{1}{2}} \left(sI+(1-s)\{A^{-\frac{1}{2}}B A^{-\frac{1}{2}}\}^{-1}\right)A^{-\frac{1}{2}}\right\}.	
\end{align*}
Therefore,
\begin{align}\label{Jhai}
\left(s A+(1-s) B\right)^{-1}
\leq s A^{-1}+(1-s)B^{-1}.	
\end{align}
This shows that the function $x\mapsto x^{-1}$ is operator convex on $(0, \infty).$
Therefore, for each $\alpha>0,$ the function $f_\alpha(x) := (1+\alpha x)^{-1}x\alpha^{-\lambda}=\dfrac{1-(1+\alpha x)^{-1}}{\alpha} \alpha^{-\lambda}$ 
is operator concave on $(0, \infty).$ 

From \cite[Proposition 2.1]{RAISSOULI2017687}, we know that for $\lambda \in (0, 1)$ and $x \in (0, \infty)$, 
\begin{align}
x^{\lambda}=\dfrac{\sin(\lambda \pi)}{\pi}\int_0^{\infty} t^{\lambda -1}(1+tx^{-1})^{-1}dt.	
\label{powerf0}
\end{align}
By changing of variable $t=\frac{1}{\alpha},$ we have
\begin{align}
x^{\lambda}&=\dfrac{\sin(\lambda \pi)}{\pi}\int_0^{\infty} (1+\alpha x)^{-1}x \cdot\alpha^{-\lambda} d\alpha \nonumber\\ 
&=\frac{\sin(\lambda \pi)}{\pi}  \int_0^{\infty} f_{\alpha}(x)  d\alpha.	
\label{powerf}
\end{align}
We now show that $f_\alpha(x)$ 
is uniformly Riemann integrable on $ \alpha  \in (0, \infty)$ for $x$ on 
bounded and closed intervals, therefore Lemma \ref{intocanv} applies and  $x^{\lambda}$ is operator concave. In fact, 
\begin{align*}
	\left|\int_{\delta_1}^{\delta_2} f_\alpha(x) d \alpha \right|
	& =\left|\int_{\delta_1}^{\delta_2} (1+\alpha x)^{-1}x\alpha^{-\lambda} d \alpha \right| \\
	&\leq M \left|\int_{\delta_1}^{\delta_2} \alpha^{-\lambda} d \alpha \right|\to 0 \,\ \,\ {\rm uniformly}, \,\ \,\ {\rm as}\,\ \delta_1, \delta_2 \to 0.		\\
	\left|\int_{N_1}^{N_2} f_\alpha(x) d \alpha \right|
	& =\left|\int_{N_1}^{N_2} (1+\alpha x)^{-1}\alpha x\alpha^{-(\lambda+1)} d \alpha \right| \\
	&\leq  \left|\int_{N_1}^{N_2}\alpha^{-(\lambda+1)} d \alpha \right|\to 0 \,\ \,\ {\rm uniformly},  \,\ {\rm as}\,\ N_1, N_2 \to \infty.		
\end{align*}

Let $C([0,M])$ be the Banach space of continuous functions on $[0, M]$.  
For fixed interval $[a, b] \subset (0, \infty)$ 
define 
$h: \alpha\in[a, b] \mapsto h(\alpha) \in C([0,M])$
by 
 $h(\alpha)(x): = f_\alpha (x)$. Then $ h(\alpha)$ is continuous on $[a, b]$
 because
  \begin{align*}
 	\| h_\alpha - h_{\alpha_0} \|& =
 	\sup_{ x \in [0,M] } \left|
 	\dfrac{ x}{(1+\alpha x)\alpha^{\lambda}} - 	\dfrac{ x}{(1+\alpha_0 x)\alpha_0^{\lambda}} \right|\\ 
 &\leq \frac{M}{a^{2\lambda}}\sup_{ x \in [0,M] } \left|
(1+\alpha x)\alpha^{\lambda} - (1+\alpha_0 x)\alpha_0^{\lambda} \right|	\to 0, \,\ \,\ {\rm as} \,\ \alpha\to \alpha_0.
 \end{align*}
 Then $C([0,M])$-valued Riemann integral 
	$\displaystyle \int_a^b h(\alpha) d \alpha$ exists, which implies (\ref{Riemm2}).

Denote $g_{\alpha}(x)=(\alpha+1)^{-1}-(\alpha+x)^{-1}.$ By (\ref{Jhai}), $g_{\alpha}(x)$ is operator concave for $\alpha\geq 0.$

For any $x>0,$ 
\begin{align}
\log(x)=\int_0^1 \dfrac{1-(1-t+tx)^{-1}}{t}dt,	
\end{align}
which could be derived in the proof of \cite[Proposition 3.1]{RAISSOULI2017687}.
Replacing $t$ by $\dfrac{1}{\alpha+1},$ $\alpha\in(0,\infty),$
\begin{align}
\log(x)&=\int_0^{\infty} [(\alpha+1)^{-1}-(\alpha+x)^{-1}]d\alpha 
\nonumber 
\\
&=\int_0^{\infty} g_{\alpha}(x)d\alpha. 
\label{logf}
\end{align}
By the same reasoning as for $x^\lambda$ above, $\log(x)$ is operator concave.
\end{proof}

Similar argument as in Lemma \ref{intocanv} gives 

\begin{lemma} \label{intomono}
Let $f_\alpha(x)$ be  a family of operator monotone functions on a unital JB-algebra $\Al$ 
indexed by $\alpha$ in $(0, \infty)$. Assume $f_\alpha(x)$ is
uniformly Riemann integrable on $ \alpha  \in (0, \infty)$ for $x$ on 
bounded and closed intervals. 
Then $\displaystyle \int_0^\infty f_\alpha(x) d \alpha$ is also operator monotone. 

\end{lemma}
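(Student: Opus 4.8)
The plan is to mirror the proof of Lemma \ref{intocanv} almost verbatim, replacing the word ``concave'' by ``monotone'' throughout. The hypothesis of uniform Riemann integrability is the same, so the same extraction argument yields, for each positive number $M$, sequences $\delta_n \to 0$, $N_n \to \infty$, and equisubdivisions $\delta_n = \alpha_{0(n)} < \alpha_{1(n)} < \cdots < \alpha_{m(n)} = N_n$ of $[\delta_n, N_n]$ such that
\begin{align*}
\int_0^\infty f_\alpha(x)\, d\alpha = \lim_{n \to \infty} \sum_{k=1}^{m(n)} f_{\alpha_{n_k}}(x)\, \frac{N_n - \delta_n}{m(n)} \quad \text{uniformly on } x \in [0, M].
\end{align*}

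Next I would invoke the two analogues of the facts used at the end of Lemma \ref{intocanv}: first, a nonnegative linear combination of operator monotone functions is operator monotone (immediate from the definition, since $0 \le A \le B$ implies $f_\alpha(A) \le f_\alpha(B)$ for each $\alpha$, and nonnegative linear combinations preserve the order); second, a norm limit of operator monotone functions is operator monotone. For the second point, if $g_n \to g$ uniformly on $[0, M]$ and each $g_n$ is operator monotone, then for $0 \le A \le B$ with spectra in $[0, M]$, functional calculus in the JB-algebra (see Remark \ref{rmkintunif} and \cite[Proposition 1.21]{Alfsen2003}) gives $g_n(A) \to g(A)$ and $g_n(B) \to g(B)$ in norm; since $g_n(A) \le g_n(B)$ and the positive cone is norm-closed, passing to the limit yields $g(A) \le g(B)$.

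Combining these, the partial sums $\sum_{k} f_{\alpha_{n_k}}(x)\, \frac{N_n - \delta_n}{m(n)}$ are operator monotone for each $n$, and their uniform (hence, after substituting an element $A$, norm) limit $\int_0^\infty f_\alpha(x)\, d\alpha$ is operator monotone. Since $M$ was arbitrary, this holds on all of $[0, \infty)$.

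I do not expect any genuine obstacle here: the argument is entirely parallel to Lemma \ref{intocanv}, and the only thing to be careful about is that the order relation $A \le B$ (rather than the convex-combination inequality) must be preserved under both nonnegative linear combinations and norm limits — both of which follow from the positive cone being a norm-closed convex cone. The statement ``Similar argument as in Lemma \ref{intocanv} gives'' in the excerpt signals exactly this, so a one- or two-line proof referring back to Lemma \ref{intocanv} and substituting ``monotone'' for ``concave'' would suffice.
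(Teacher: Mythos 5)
Your proposal is correct and matches the paper's intent exactly: the paper's proof of this lemma is literally the remark ``Similar argument as in Lemma \ref{intocanv} gives,'' and you have spelled out that argument, correctly identifying the two facts needed (nonnegative linear combinations and norm limits preserve operator monotonicity, the latter via norm-closedness of the positive cone together with functional calculus as in Remark \ref{rmkintunif}).
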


\begin{remark} \label{NewRes}
Lemmas {\rm \ref{intocanv}} and {\rm \ref{intomono}} are also true for 
operators on Hilbert space because $B(H)_{sa}$ is a special 
JB-algebra. The two results do not seem to appear in 
literature on Hilbert space operators. 
\end{remark}

\begin{proposition}\label{opmpowerlog}
Let $\Al$ be a unital JB-algebra. The functions $x\mapsto x^{\lambda},$ $\lambda\in[0, 1],$ and $x\mapsto \log(x)$ are all operator monotone increasing on $(0,+\infty).$ 	
\end{proposition}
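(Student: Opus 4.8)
The plan is to mirror the proof of Proposition \ref{LJocanv}: via the integral representations \eqref{powerf} and \eqref{logf}, reduce operator monotonicity of $x^{\lambda}$ and $\log x$ to operator monotonicity of the integrand families $f_\alpha$ and $g_\alpha$, and then invoke Lemma \ref{intomono} together with the uniform Riemann integrability already verified in the proof of Proposition \ref{LJocanv}.

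The one genuinely JB-algebraic ingredient I would establish first is that inversion $x\mapsto x^{-1}$ is operator monotone \emph{decreasing} on $(0,\infty)$, i.e.\ $0<A\le B$ in a unital JB-algebra implies $B^{-1}\le A^{-1}$. The argument is a $U_A$-manipulation: from $B-A\ge 0$, Proposition \ref{3inv}(1) and linearity \eqref{LJI} give $U_{A^{-1/2}}(B)\ge U_{A^{-1/2}}(A)$; since $U_{A^{1/2}}$ and $U_{A^{-1/2}}$ are mutually inverse (Proposition \ref{3inv}(3)) and $U_{A^{1/2}}(I)=A$, one has $U_{A^{-1/2}}(A)=I$, so $C:=U_{A^{-1/2}}(B)\ge I$; then $\Sp{C}\subset[1,\infty)$ and order-preserving functional calculus gives $C^{-1}\le I$; finally Proposition \ref{3inv}(2) identifies $C^{-1}=U_{A^{1/2}}(B^{-1})$ and, likewise, $I=U_{A^{1/2}}(A^{-1})$, so applying the positive (hence order-preserving) map $U_{A^{-1/2}}$ to $U_{A^{1/2}}(B^{-1})\le U_{A^{1/2}}(A^{-1})$ yields $B^{-1}\le A^{-1}$.

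Granting this, for each fixed $\alpha>0$ I would deduce that $f_\alpha(x)=\frac{1-(1+\alpha x)^{-1}}{\alpha}\,\alpha^{-\lambda}$ and $g_\alpha(x)=(\alpha+1)^{-1}-(\alpha+x)^{-1}$ are operator monotone: if $0\le A\le B$ then $I+\alpha A\le I+\alpha B$ and $\alpha I+A\le \alpha I+B$ are positive invertible, so the antitonicity of inversion and positivity of the scalar factors give $f_\alpha(A)\le f_\alpha(B)$ and $g_\alpha(A)\le g_\alpha(B)$, all expressions agreeing with functional calculus because the operations take place in the associative commutative subalgebra generated by the relevant element. Since the families $\{f_\alpha\}$ and $\{g_\alpha\}$ are uniformly Riemann integrable on $\alpha\in(0,\infty)$ over bounded closed $x$-intervals (shown in the proof of Proposition \ref{LJocanv}), Lemma \ref{intomono} gives that $\int_0^\infty f_\alpha(x)\,d\alpha$ and $\int_0^\infty g_\alpha(x)\,d\alpha$ are operator monotone; multiplying by the positive constant $\sin(\lambda\pi)/\pi$ for $\lambda\in(0,1)$ and using \eqref{powerf} and \eqref{logf} finishes $x\mapsto x^{\lambda}$ ($\lambda\in(0,1)$) and $x\mapsto\log x$. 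The boundary cases $\lambda=0$ (constant function $1$) and $\lambda=1$ (identity) are immediate.

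I expect the inversion step to be the main obstacle: it is the only place where a Hilbert space argument is unavailable, and the manipulation must be carried out entirely within the $U_A$-calculus of Proposition \ref{3inv}, with care that every product, power, and inverse is read off via functional calculus in a commutative associative subalgebra so that the familiar identities ($U_{A^{1/2}}(I)=A$, $U_{A^{-1/2}}=U_{A^{1/2}}^{-1}$, and the inverse formula in Proposition \ref{3inv}(2)) legitimately apply.
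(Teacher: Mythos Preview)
Your proposal is correct and follows essentially the same approach as the paper's proof: deduce operator monotonicity of $f_\alpha$ and $g_\alpha$ from antitonicity of inversion, then apply Lemma~\ref{intomono} via the integral representations \eqref{powerf} and \eqref{logf}. The only difference is that the paper simply cites \cite[Proposition 3.5.3]{hanche1984jordan} for the fact that $x\mapsto x^{-1}$ is operator monotone decreasing, whereas you supply a direct $U_A$-calculus argument for it; your argument for that step is valid.
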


\begin{proof}
	
	By \cite[Proposition 3.5.3]{hanche1984jordan}, 
	$x^{-1}$ is operator monotone decreasing. Thus, $f_{\alpha}(x),$ $g_{\alpha}(x)$ are operator monotone increasing. By Lemma \ref{intomono}, $x^{\lambda}$ and $\log(x)$ is also operator monotone increasing. 
	
	Note that the monotonicity of $x^{\lambda}$ has another proof in \cite[Lemma 3.1]{NEAL2000284} 
	using different techniques.

\end{proof}

\section{Operator means}

In this section, 
we introduce operator means for elements in JB-algebras and establish some basic identities and 
inequalities.

For two positive invertible elements $A, B$ in a unital JB-algebra $\Al$ and $0\leq \lambda\leq 1$, we denote
\begin{align*}
&\mbox{the weighted harmonic mean}: A!_{\lambda} B=\left((1-\lambda)A^{-1}+\lambda B^{-1}\right)^{-1};\\
&\mbox{the weighted geometric mean}: A\#_{\lambda} B=\left\{A^{\frac{1}{2}}\{A^{-\frac{1}{2}}BA^{-\frac{1}{2}}\}^{\lambda}A^{\frac{1}{2}}\right\};\\
&\mbox{the weighted arithmetic mean}: A\triangledown_{\lambda} B=(1-\lambda)A+\lambda B.	
\end{align*}

The associative algebra version of the following theorem and corollary 
for operators on Hilbert space are proved in \cite{RAISSOULI2017687}. 
Here we establish the non-associative version which has different connotations than 
the associative version using the theory of JB-algebras.
 
\begin{theorem}\label{TJwgitg}
Let $A, B$ be two positive invertible elements in a unital JB-algebra $\Al.$ Then for any $0<\lambda <1$
\begin{align}
A\#_{\lambda}B
&=\dfrac{\sin(\lambda \pi)}{\pi}\int_0^{\infty} t^{\lambda -1}(A^{-1}+t B^{-1})^{-1}dt	 \label{Lgnitg1} \\
&=\dfrac{\sin(\lambda \pi)}{\pi}\int_0^1 \frac{t^{\lambda -1}}{(1-t)^{\lambda}}(A!_t B)dt.
\end{align}
\end{theorem}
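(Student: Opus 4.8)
The plan is to lift the scalar integral formula $(\ref{powerf0})$ for $x^{\lambda}$ to a norm-convergent $\Al$-valued integral via functional calculus, and then transport it through the map $U_{A^{1/2}}$ using Proposition \ref{3inv}. Set $X := \{A^{-\frac{1}{2}}BA^{-\frac{1}{2}}\} = U_{A^{-1/2}}(B)$. Since $A$ and $B$ are positive invertible, Proposition \ref{3inv} gives that $X$ is positive invertible with $X^{-1} = \{A^{\frac{1}{2}}B^{-1}A^{\frac{1}{2}}\} = U_{A^{1/2}}(B^{-1})$, and by definition $A\#_{\lambda} B = U_{A^{1/2}}(X^{\lambda})$. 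Applying the functional calculus for JB-algebras to $(\ref{powerf0})$ with $x$ replaced by $X$ yields
\[
X^{\lambda} = \frac{\sin(\lambda\pi)}{\pi}\int_0^{\infty} t^{\lambda-1}\bigl(I + tX^{-1}\bigr)^{-1}\,dt,
\]
where the improper integral converges in norm: on the compact spectrum $\Sp{X}\subset[m,M]\subset(0,\infty)$ the scalar integrand $t^{\lambda-1}(1+ts^{-1})^{-1}$ is dominated, uniformly in $s$, by $t^{\lambda-1}$ near $0$ and by $Mt^{\lambda-2}$ near $\infty$, both integrable since $0<\lambda<1$; hence the integral converges uniformly on $\Sp{X}$ and, functional calculus being an isometric homomorphism onto $C(\Sp{X})$, it passes to the $\Al$-valued integral above.

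Next I would apply the bounded linear map $U_{A^{1/2}}$ to both sides. Because a bounded linear map commutes with norm limits of Riemann sums, it may be taken inside the integral, so that
\[
A\#_{\lambda} B = \frac{\sin(\lambda\pi)}{\pi}\int_0^{\infty} t^{\lambda-1}\,U_{A^{1/2}}\bigl((I+tX^{-1})^{-1}\bigr)\,dt.
\]
The heart of the proof is the identity $U_{A^{1/2}}\bigl((I+tX^{-1})^{-1}\bigr) = (A^{-1}+tB^{-1})^{-1}$. Within the associative JB-subalgebra generated by $A$, the operator $U_{A^{1/2}}$ acts as multiplication by $(A^{1/2})^2 = A$, so $U_{A^{1/2}}(A^{-1}) = A\circ A^{-1} = I$; combined with $U_{A^{1/2}}(B^{-1}) = X^{-1}$ and the linearity of $U_{A^{1/2}}$ (cf. $(\ref{LJI})$), this gives $I + tX^{-1} = U_{A^{1/2}}(A^{-1}+tB^{-1})$. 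Since $A^{-1}+tB^{-1}$ is positive invertible for every $t>0$, Proposition \ref{3inv}(2) yields $(I+tX^{-1})^{-1} = U_{A^{-1/2}}\bigl((A^{-1}+tB^{-1})^{-1}\bigr)$, and applying $U_{A^{1/2}}$, which is inverse to $U_{A^{-1/2}}$ by Proposition \ref{3inv}(3), produces $(A^{-1}+tB^{-1})^{-1}$. This establishes the first equality.

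For the second equality I would perform the substitution $t = \frac{s}{1-s}$, a smooth bijection of $(0,1)$ onto $(0,\infty)$: then $t^{\lambda-1} = s^{\lambda-1}(1-s)^{1-\lambda}$, $dt = (1-s)^{-2}\,ds$, and factoring $A^{-1}+tB^{-1} = (1-s)^{-1}\bigl((1-s)A^{-1}+sB^{-1}\bigr)$ gives $(A^{-1}+tB^{-1})^{-1} = (1-s)\,(A!_{s}B)$; the powers of $(1-s)$ then collapse to $(1-s)^{-\lambda}$, so that the integral becomes $\int_0^1 \frac{s^{\lambda-1}}{(1-s)^{\lambda}}(A!_{s}B)\,ds$, which is the asserted formula. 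Both integrals converge absolutely, since $0<\lambda<1$ and $s\mapsto A!_{s}B = \bigl((1-s)A^{-1}+sB^{-1}\bigr)^{-1}$ is norm-continuous on $[0,1]$.

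I expect the main obstacle to be the identification $U_{A^{1/2}}\bigl((I+tX^{-1})^{-1}\bigr) = (A^{-1}+tB^{-1})^{-1}$: in a JB-algebra $U_{A^{1/2}}$ is not literal two-sided multiplication, so one must combine the linearity of $U$, the inversion formula $\{ABA\}^{-1} = \{A^{-1}B^{-1}A^{-1}\}$, and the relation $U_{A^{1/2}}U_{A^{-1/2}} = \mathrm{id}$; by contrast, the scalar representation, the interchange of $U_{A^{1/2}}$ with the integral, and the change of variables are all routine.
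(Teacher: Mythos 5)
Your proposal is correct and follows essentially the same route as the paper's proof: both rest on applying the JB-functional calculus to the scalar integral representation (\ref{powerf0}) of $x^{\lambda}$, transporting the result through $U_{A^{1/2}}$ via Proposition \ref{3inv} (you merely run the chain of identities in the opposite direction, from $A\#_{\lambda}B$ to the integral rather than from the integral $E$ to $A\#_{\lambda}B$), and using the same substitution $t=s/(1-s)$ for the harmonic-mean formula. The convergence details you supply for the improper integral are a welcome addition that the paper leaves implicit.
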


\begin{proof}
By  Proposition \ref{3inv}, 
\begin{align*}
&B=U_{A^{-\frac{1}{2}}}U_{A^{\frac{1}{2}}}(B)
=\left\{A^{-\frac{1}{2}}\{A^{\frac{1}{2}}BA^{\frac{1}{2}}\}A^{-\frac{1}{2}}\right\}		\\
&\left(U_{A^{1/2}}(B)\right)^{-1}
=\{A^{\frac{1}{2}}BA^{\frac{1}{2}}\}^{-1}=\{A^{-\frac{1}{2}}B^{-1}A^{-\frac{1}{2}}\}.	
\end{align*}

Denote
\begin{align*}
E=\dfrac{\sin(\lambda \pi)}{\pi}\int_0^{\infty} t^{\lambda -1}(A^{-1}+tB^{-1})^{-1}dt.	
\end{align*}

Therefore,
\begin{align}
E
&=\dfrac{\sin(\lambda \pi)}{\pi}\int_0^{\infty} t^{\lambda -1}\left[U_{A^{-\frac{1}{2}}}(1+tU_{A^{\frac{1}{2}}}B^{-1})\right]^{-1}dt \nonumber \\
&=\dfrac{\sin(\lambda \pi)}{\pi}\int_0^{\infty} t^{\lambda -1}\left\{A^{-\frac{1}{2}}\left(1+t\{A^{\frac{1}{2}}B^{-1}A^{\frac{1}{2}}\}\right)A^{-\frac{1}{2}}\right\}^{-1}dt \nonumber\\
&=\dfrac{\sin(\lambda \pi)}{\pi}\int_0^{\infty} t^{\lambda -1} \left\{A^{\frac{1}{2}} \left(1+t\{A^{\frac{1}{2}}B^{-1}A^{\frac{1}{2}}\}\right)^{-1}	A^{\frac{1}{2}}\right\}dt \nonumber\\
&=\left\{A^{\frac{1}{2}} \dfrac{\sin(\lambda \pi)}{\pi} \int_0^{\infty} t^{\lambda -1} \left(1+t\{A^{-\frac{1}{2}}BA^{-\frac{1}{2}}\}^{-1}\right)^{-1}dt~A^{\frac{1}{2}}\right\}.  \nonumber
\end{align}

Applying functional calculus in JB-algebras (see e.g. \cite[Proposition 1.21]{Alfsen2003}) 
to (\ref{powerf0}),   
\begin{align*}
\left\{A^{-\frac{1}{2}}BA^{-\frac{1}{2}}\right\}^{\lambda}
=\dfrac{\sin(\lambda \pi)}{\pi}\int_0^{\infty} t^{\lambda -1} \left(1+t\{A^{-\frac{1}{2}}BA^{-\frac{1}{2}}\}^{-1}\right)^{-1}dt.	
\end{align*}

Then,
\begin{align*}
A\#_{\lambda} B&=\left\{A^{\frac{1}{2}}\{A^{-\frac{1}{2}}BA^{-\frac{1}{2}}\}^{\lambda}A^{\frac{1}{2}}\right\}\\
&=\left\{A^{\frac{1}{2}}\dfrac{\sin(\lambda \pi)}{\pi}\int_0^{\infty} t^{\lambda -1} \left(1+t\{A^{-\frac{1}{2}}BA^{-\frac{1}{2}}\}^{-1}\right)^{-1}dt~A^{\frac{1}{2}}\right\}\\
&=\frac{\sin(\lambda \pi)}{\pi}\int_0^{\infty} t^{\lambda -1}(A^{-1}+tB^{-1})^{-1}dt.
\end{align*}


By changing of variable $t=\dfrac{\gamma}{1-\gamma},$ $\gamma\in (0,1)$ in (\ref{Lgnitg1}), we have 
\begin{align*}
A\#_{\lambda} B=\dfrac{\sin(\lambda \pi)}{\pi}\int_0^1 \frac{t^{\lambda -1}}{(1-t)^{\lambda}}(A!_t B)dt.
\end{align*}
\end{proof}

\begin{corollary}\label{Jgmasym}
For any any positive invertible elements $A, B$ in $\Al$ and $\lambda \in [0,1],$ then $A\#_{\lambda} B=B\#_{1-\lambda} A,$ i.e., 
$$\left\{A^{\frac{1}{2}}\{A^{-\frac{1}{2}}BA^{-\frac{1}{2}}\}^{\lambda}A^{\frac{1}{2}}\right\}
=\left\{B^{\frac{1}{2}}\{B^{-\frac{1}{2}}AB^{-\frac{1}{2}}\}^{1-\lambda}B^{\frac{1}{2}}\right\}.$$ 
Moreover, if $\lambda=\frac{1}{2},$ then $A\# B=B\#A.$
\end{corollary}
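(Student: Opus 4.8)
The plan is to read the identity off from the integral representation established in Theorem \ref{TJwgitg}, after first disposing of the two endpoint values $\lambda\in\{0,1\}$, which are excluded from the hypothesis of that theorem.

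\emph{Endpoints.} Since $A,B$ are positive and invertible, $\{A^{-1/2}BA^{-1/2}\}$ is positive and invertible by Proposition \ref{3inv}, so functional calculus in $\Al$ gives $\{A^{-1/2}BA^{-1/2}\}^{0}=I$ and hence $A\#_0 B=U_{A^{1/2}}(I)=A$; on the other side $B\#_1 A=U_{B^{1/2}}U_{B^{-1/2}}(A)=A$ by Proposition \ref{3inv}(3). Thus $A\#_0 B=B\#_1 A$, and interchanging the roles of $A$ and $B$ gives $A\#_1 B=B\#_0 A$.

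\emph{The case $0<\lambda<1$.} I would apply formula (\ref{Lgnitg1}) with $A$ and $B$ interchanged and $\lambda$ replaced by $1-\lambda$, using $\sin((1-\lambda)\pi)=\sin(\lambda\pi)$, to obtain
\begin{align*}
B\#_{1-\lambda}A=\frac{\sin(\lambda\pi)}{\pi}\int_0^{\infty} t^{-\lambda}\,(B^{-1}+tA^{-1})^{-1}\,dt .
\end{align*}
Substituting $t=1/s$ --- a change of variables in an $\Al$-valued Riemann integral whose integrand is norm-continuous on $(0,\infty)$, the same regularity already used in the proof of Theorem \ref{TJwgitg} --- turns this into
\begin{align*}
B\#_{1-\lambda}A=\frac{\sin(\lambda\pi)}{\pi}\int_0^{\infty} s^{\lambda-2}\,(B^{-1}+s^{-1}A^{-1})^{-1}\,ds .
\end{align*}
Finally I would invoke the elementary identity $(B^{-1}+s^{-1}A^{-1})^{-1}=s\,(A^{-1}+sB^{-1})^{-1}$, which holds because $B^{-1}+s^{-1}A^{-1}=s^{-1}(A^{-1}+sB^{-1})$ with $A^{-1}+sB^{-1}$ positive and invertible; inserting it collapses the last integral to $\frac{\sin(\lambda\pi)}{\pi}\int_0^{\infty} s^{\lambda-1}(A^{-1}+sB^{-1})^{-1}\,ds$, which is exactly $A\#_\lambda B$ by (\ref{Lgnitg1}). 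Specializing to $\lambda=\tfrac12$ yields the final assertion $A\#B=B\#A$.

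I do not expect a genuine obstacle: all of the analytic content is carried by Theorem \ref{TJwgitg}, and what remains is a scalar change of variables together with routine bookkeeping about inverses of positive scalar multiples in a JB-algebra. The only points deserving a word of care are the separate treatment of $\lambda\in\{0,1\}$ (forced since Theorem \ref{TJwgitg} assumes $0<\lambda<1$) and the observation that the substitution $t=1/s$ is applied to an integral taking values in $\Al$ rather than in $\Rdb$. One could alternatively try to prove the identity directly from the fundamental formula $U_{U_A B}=U_A U_B U_A$ for Jordan algebras together with the functional calculus, but the route through Theorem \ref{TJwgitg} is shorter and avoids such manipulations.
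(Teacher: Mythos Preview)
Your proof is correct and follows essentially the same strategy as the paper: dispose of the endpoints directly, then read off the identity for $0<\lambda<1$ from the integral representation of Theorem~\ref{TJwgitg} via a change of variables. The only difference is cosmetic: the paper uses the \emph{second} formula of Theorem~\ref{TJwgitg},
\[
A\#_{\lambda}B=\frac{\sin(\lambda\pi)}{\pi}\int_0^1 \frac{t^{\lambda-1}}{(1-t)^{\lambda}}\,(A!_t B)\,dt,
\]
substitutes $t\mapsto 1-s$ on the finite interval $[0,1]$, and invokes the obvious symmetry $A!_{1-s}B=B!_s A$; you instead use the first formula (\ref{Lgnitg1}) on $(0,\infty)$ with the substitution $t\mapsto 1/s$ and the scalar identity $(B^{-1}+s^{-1}A^{-1})^{-1}=s(A^{-1}+sB^{-1})^{-1}$. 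Either route works; the paper's version has the mild advantage that the substitution is linear on a bounded interval, so no comment about the $\Al$-valued improper integral is needed.
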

\begin{proof}
	It is straightforward to verify the identity for $\lambda = 0, 1$. We assume 
	 $\lambda \neq 0, 1$. 
From Theorem \ref{TJwgitg}, 
\begin{align*}
A\#_{\lambda}B=\dfrac{\sin(\lambda \pi)}{\pi}\int_0^1 \frac{t^{\lambda -1}}{(1-t)^{\lambda}}(A!_t B)dt.	
\end{align*}
Replacing $t$ by $1-s$ with $s\in [0,1]$ we have
\begin{align*}
A\#_{\lambda}B
&=\dfrac{\sin\left((1-\lambda) \pi\right)}{\pi}\int_0^1\frac{(1-s)^{\lambda -1}}{s^{\lambda}}(A!_{1-s} B)ds\\
&=\dfrac{\sin\left((1-\lambda) \pi\right)}{\pi}\int_0^1\frac{s^{(1-\lambda)-1}}{(1-s)^{1-\lambda}}(B!_s A)ds\\
&=B\#_{1-\lambda} A.
\end{align*}

Moreover, if $\lambda =\dfrac{1}{2},$ then $A\# B=A\#_{\frac{1}{2}}B=B\#_{\frac{1}{2}}A=B\#A.$
\end{proof}

\begin{proposition}\label{pjwgn}
The weighted geometric mean $A\#_{\lambda}B$ defined in $\Al$ has the following properties:
\begin{itemize}
\item[(i)]  $(\alpha A)\#_{\lambda}(\beta B)= \alpha^{1-\lambda} \beta^{\lambda} (A\#_{\lambda}B),$ for any nonnegative numbers $\alpha$ and $\beta.$ 
\item[(ii)] If $A\leq C$ and $B\leq D,$ then $A\#_{\lambda}B\leq C\#_{\lambda}D.$	
\item[(iii)] $A\#_{\lambda}B$ is operator concave with respect to $A,B$ individually. 
\item[(iv)] $\{C(A\#_{\lambda}B)C\}=\{CAC\}\#_{\lambda}\{CBC\},$ for any invertible $C$ in $\Al.$
\item[(v)] $(A\#_{\lambda} B)^{-1}=A^{-1}\#_{\lambda} B^{-1}.$
\end{itemize}
\end{proposition}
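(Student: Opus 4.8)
The plan is to prove the five properties in the order (i), (v), (iv), (ii), (iii), since the later ones build on the earlier ones and on the integral representation from Theorem \ref{TJwgitg}.

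\medskip

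For (i), I would work directly from the definition $A\#_\lambda B = \{A^{1/2}\{A^{-1/2}BA^{-1/2}\}^\lambda A^{1/2}\}$. Scaling $A \mapsto \alpha A$ and $B \mapsto \beta B$ gives $(\alpha A)^{-1/2}(\beta B)(\alpha A)^{-1/2} = \frac{\beta}{\alpha}\{A^{-1/2}BA^{-1/2}\}$ inside the $U$-operator (using that $U_{cX} = c^2 U_X$ for scalars $c$ and that $(\alpha A)^{\pm 1/2} = \alpha^{\pm 1/2} A^{\pm 1/2}$ by functional calculus). Applying functional calculus again, $\big(\tfrac{\beta}{\alpha}Y\big)^\lambda = \big(\tfrac{\beta}{\alpha}\big)^\lambda Y^\lambda$ for $Y \geq 0$, and then the outer $U_{(\alpha A)^{1/2}} = \alpha\, U_{A^{1/2}}$ produces the factor $\alpha \cdot \alpha^{-\lambda}\beta^\lambda = \alpha^{1-\lambda}\beta^\lambda$. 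For (v), I would use Proposition \ref{3inv}(2): $(A\#_\lambda B)^{-1} = \{A^{-1/2}(\{A^{-1/2}BA^{-1/2}\}^\lambda)^{-1}A^{-1/2}\}$, and $(\{A^{-1/2}BA^{-1/2}\}^\lambda)^{-1} = \{A^{1/2}B^{-1}A^{1/2}\}^\lambda$ by functional calculus together with Proposition \ref{3inv}(2) applied to $\{A^{-1/2}BA^{-1/2}\}^{-1} = \{A^{1/2}B^{-1}A^{1/2}\}$; then identifying the result with $A^{-1}\#_\lambda B^{-1}$ via the fundamental identity $U_{A^{-1/2}}U_{A^{1/2}} = \mathrm{id}$.

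\medskip

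For (iv), the cleanest route is the integral representation (\ref{Lgnitg1}): $A\#_\lambda B = \frac{\sin\lambda\pi}{\pi}\int_0^\infty t^{\lambda-1}(A^{-1}+tB^{-1})^{-1}\,dt$. Applying $U_C$ (with $C$ invertible) and pulling it through the integral (justified by the norm-convergence of the integral, as in Remark \ref{rmkintunif}), it suffices to show $U_C\big((A^{-1}+tB^{-1})^{-1}\big) = \big(\{CAC\}^{-1} + t\{CBC\}^{-1}\big)^{-1}$. By Proposition \ref{3inv}(2), $\{CAC\}^{-1} = \{C^{-1}A^{-1}C^{-1}\}$ and likewise for $B$, so the right side is $\big(U_{C^{-1}}(A^{-1}+tB^{-1})\big)^{-1} = U_C\big((A^{-1}+tB^{-1})^{-1}\big)$, again by Proposition \ref{3inv}(2) and (3); linearity (\ref{LJI}) handles the sum. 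For (ii), I would combine Proposition \ref{omnapf} with the fact that $x \mapsto x^\lambda$ is operator monotone (Proposition \ref{opmpowerlog}): writing $A\#_\lambda B = P_{f\triangle h}(B,A)$ with $f(x)=x^\lambda$, $h(x)=x$, monotonicity in $B$ is immediate from Proposition \ref{omnapf}, while monotonicity in $A$ follows from Corollary \ref{Jgmasym} ($A\#_\lambda B = B\#_{1-\lambda}A$) and monotonicity in the other slot; chaining the two inequalities $A\#_\lambda B \leq C\#_\lambda B \leq C\#_\lambda D$ gives the claim. Finally (iii) is the operator concavity of $A \mapsto A\#_\lambda B$ (and symmetrically in $B$); here I would again use (\ref{Lgnitg1}), noting that for each fixed $t$ the map $A \mapsto (A^{-1}+tB^{-1})^{-1}$ is operator concave — this is the perspective-type concavity, obtainable from operator convexity of $x\mapsto x^{-1}$ (proved in Proposition \ref{LJocanv}) via the nonassociative perspective machinery of Proposition \ref{ocvnapf}, or directly from Proposition \ref{ocvnapf} applied to $f(x) = x^\lambda$ with $h(x)=x$ — and then Lemma \ref{intocanv} (with the uniform Riemann integrability already verified in the proof of Proposition \ref{LJocanv}, adapted to the kernel $t^{\lambda-1}(A^{-1}+tB^{-1})^{-1}$) transfers concavity to the integral.

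\medskip

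The main obstacle I anticipate is (iv): verifying $U_C\big((A^{-1}+tB^{-1})^{-1}\big) = \big(\{CAC\}^{-1}+t\{CBC\}^{-1}\big)^{-1}$ and, more delicately, justifying that $U_C$ commutes with the improper integral. The commuting step needs the uniform/norm convergence of the defining integral on the spectrum (Remark \ref{rmkintunif}) plus boundedness and linearity of $U_C$; I would phrase this as: the partial integrals $\int_\delta^N$ converge in norm, $U_C$ is bounded and linear, hence $U_C$ of the limit is the limit of $U_C$ of the partial integrals, each of which is a genuine Riemann integral over which $U_C$ passes termwise. Alternatively, if this analytic bookkeeping is unwelcome, property (iv) can be proved more directly from the definition using the JB-algebra identity $U_C U_{A^{1/2}} = U_{\{CAC\}^{1/2}} U_{V}$ for a suitable Jordan-orthogonal $V$ (an Anderson–Trapp type factorization), but that identity itself requires care in the nonassociative setting, so the integral approach is likely the safer path.
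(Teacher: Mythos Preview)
Your argument is correct and, for (i), (ii), (iv), (v), matches the paper's proof essentially line-for-line: (i) from the definition, (v) from Proposition~\ref{3inv}, (iv) via the integral representation of Theorem~\ref{TJwgitg} (the paper pulls $U_C$ through the integral exactly as you do, without further comment), and (ii) via operator monotonicity of $x^\lambda$ in one slot followed by the symmetry $A\#_\lambda B = B\#_{1-\lambda}A$ for the other.

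The only genuine difference is (iii). The paper goes directly: since $x\mapsto x^\lambda$ is operator concave (Proposition~\ref{LJocanv}), one computes
\[
A\#_\lambda\big[(1-t)B_1+tB_2\big]=\Big\{A^{1/2}\big[(1-t)\{A^{-1/2}B_1A^{-1/2}\}+t\{A^{-1/2}B_2A^{-1/2}\}\big]^{\lambda}A^{1/2}\Big\}\geq (1-t)A\#_\lambda B_1+tA\#_\lambda B_2,
\]
and then uses Corollary~\ref{Jgmasym} for concavity in $A$. This is precisely the alternative you mention (``directly from Proposition~\ref{ocvnapf} applied to $f(x)=x^\lambda$''), and it is the shorter path. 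Your primary route through the integral works too---writing $(A^{-1}+tB^{-1})^{-1}=P_{f_t\triangle h}(B,A)$ with $f_t(x)=1-t(x+t)^{-1}$ and $h(x)=x$, operator concavity of $f_t$ follows from operator convexity of $x^{-1}$, so Proposition~\ref{ocvnapf} gives concavity of each integrand---but this detour through the integral and Lemma~\ref{intocanv} is unnecessary once $x^\lambda$ itself is known to be operator concave. Your ``main obstacle'' in (iv) is not one the paper dwells on; boundedness and linearity of $U_C$ together with norm convergence of the partial integrals suffice, as you note.
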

\begin{proof}
For (i), it follows directly from the definition. 

Proof of (ii). Since it is trivial for $\lambda =0$ or $\lambda =1,$ here we only give the proof for $0<\lambda<1.$ If $B\leq D,$ then 
$$\{A^{-\frac{1}{2}}BA^{-\frac{1}{2}}\}\leq \{A^{-\frac{1}{2}}DA^{-\frac{1}{2}}\}.$$	 
By Corollary \ref{opmpowerlog}, the following inequality
$$\{A^{-\frac{1}{2}}BA^{-\frac{1}{2}}\}^{\lambda}\leq \{A^{-\frac{1}{2}}DA^{-\frac{1}{2}}\}^{\lambda},$$
holds for any $\lambda \in[0,1].$ 
Therefore,
\begin{align}\label{incJwgm1}
A\#_{\lambda}B
=\left\{A^{\frac{1}{2}}\{A^{-\frac{1}{2}}BA^{-\frac{1}{2}}\}^{\lambda}A^{\frac{1}{2}}\right\}
\leq \left\{A^{\frac{1}{2}}\{A^{-\frac{1}{2}}DA^{-\frac{1}{2}}\}^{\lambda}A^{\frac{1}{2}}\right\}
=A\#_{\lambda}D.
\end{align}

Similarly, if $A\leq C,$ then 
\begin{align}\label{incJwgm2}
A\#_{\lambda}D=D\#_{1-\lambda}A
\leq D\#_{1-\lambda}C
=C\#_{\lambda}D.		
\end{align}
Combing (\ref{incJwgm1}) and (\ref{incJwgm2}), we obtain the desired result.

(iii) For any $0\leq t\leq 1,$ by Proposition \ref{LJocanv}, we have 
\begin{align*}
A\#_{\lambda} [(1-t)B_1+tB_2]&=\left\{A^{\frac{1}{2}}\{A^{-\frac{1}{2}}[(1-t)B_1+tB_2]A^{-\frac{1}{2}}\}^{\lambda}A^{\frac{1}{2}}\right\}\\
&=\left\{A^{\frac{1}{2}}\left[(1-t)\{A^{-\frac{1}{2}}B_1A^{-\frac{1}{2}}\}+t\{A^{-\frac{1}{2}}B_2A^{-\frac{1}{2}}\}\right]^{\lambda}A^{\frac{1}{2}}\right\}	\\
&\geq (1-t)\left\{A^{\frac{1}{2}}\{A^{-\frac{1}{2}}B_1A^{-\frac{1}{2}}\}^{\lambda}A^{\frac{1}{2}}\right\}+t\left\{A^{\frac{1}{2}}\{A^{-\frac{1}{2}}B_2 A^{-\frac{1}{2}}\}^{\lambda}A^{\frac{1}{2}}\right\}\\
&=(1-t)A\#_{\lambda} B_1+tA\#_{\lambda} B_2.
\end{align*}
Similarly, one can show that $B\#_{1-\lambda}A$ is operator concave with respect to $A.$ Since $A\#_{\lambda}B=B\#_{1-\lambda}A,$ then $A\#_{\lambda}B$ is also operator concave with respect to $A.$

Proof of (iv).  According to Theorem \ref{TJwgitg} and Proposition \ref{3inv}
\begin{align*}
\{CAC\}\#_{\lambda} \{CBC\}
&=\dfrac{\sin(\lambda \pi)}{\pi} \int_0^{\infty} t^{\lambda -1} \left(\{CAC\}^{-1}+t\{CBC\}^{-1}\right)^{-1}dt \\
&=\dfrac{\sin(\lambda \pi)}{\pi} \int_0^{\infty} t^{\lambda -1} \{C^{-1}(A^{-1}+tB^{-1})C^{-1}\}^{-1}dt\\
&=\dfrac{\sin(\lambda \pi)}{\pi} \int_0^{\infty} t^{\lambda -1} \{C(A^{-1}+tB^{-1})^{-1}C\}dt\\
&=\left\{C \dfrac{\sin(\lambda \pi)}{\pi} \int_0^{\infty} t^{\lambda -1} (A^{-1}+tB^{-1})^{-1}dt~C\right\}\\
&=\{C (A\#_{\lambda}B)C\}.
\end{align*}

(v) According to Lemma 3.2.10 and  Proposition \ref{3inv}
\begin{align*}
(A\#_{\lambda} B)^{-1}&=\left\{A^{\frac{1}{2}}\left(\{A^{-\frac{1}{2}}BA^{-\frac{1}{2}}\}\right)^{\lambda}A^{\frac{1}{2}}\right\}	^{-1}\\
&=\left\{A^{-\frac{1}{2}}\left(\{A^{-\frac{1}{2}}BA^{-\frac{1}{2}}\}\right)^{-\lambda}A^{-\frac{1}{2}}\right\}\\
&=\left\{A^{-\frac{1}{2}}\left(\{A^{\frac{1}{2}}B^{-1}A^{\frac{1}{2}}\}\right)^{\lambda}A^{-\frac{1}{2}}\right\}\\
&=A^{-1}\#_{\lambda} B^{-1}.
\end{align*}
\end{proof}

It is well-known that the following Young inequalities 
\begin{align} \label{owhga}
A!_{\lambda} B
\leq A^{\frac{1}{2}}(A^{-\frac{1}{2}}BA^{-\frac{1}{2}})^{\lambda}A^{\frac{1}{2}}
\leq A\triangledown_{\lambda} B	
\end{align}	
hold for any strictly positive operators $A$ and $B$ on complex Hilbert space $H.$ 

The next theorem generalizes (\ref{owhga}) to JB-algebras.
\begin{theorem}[Young inequalities for JB-algebras]\label{LJwhga}
Let $A, B$ be positive invertible elements in $\Al.$ For any $0\leq \lambda \leq 1,$ 
\begin{align} \label{LJwhga2}
A!_{\lambda} B
\leq A\#_{\lambda} B
\leq A\triangledown_{\lambda} B.	
\end{align}	
\end{theorem}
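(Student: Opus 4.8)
The plan is to prove the two inequalities in \eqref{LJwhga2} separately, in each case reducing everything to a scalar inequality that is then promoted to JB-algebras via the perspective function machinery of Theorem \ref{tnapf}. The key observation is that all three means can be written in perspective form with $h(t) = t$: indeed $A\#_\lambda B = P_{f\triangle h}(B,A)$ with $f(x) = x^\lambda$, $A\triangledown_\lambda B = P_{g\triangle h}(B,A)$ with $g(x) = (1-\lambda) + \lambda x$ (since $\{A^{1/2}((1-\lambda)I + \lambda\{A^{-1/2}BA^{-1/2}\})A^{1/2}\} = (1-\lambda)A + \lambda B$ by linearity of $U_{A^{1/2}}$ and Proposition \ref{3inv}), and $A!_\lambda B = P_{e\triangle h}(B,A)$ with $e(x) = ((1-\lambda) + \lambda x^{-1})^{-1}$, using the identities $(U_{A^{1/2}}(B))^{-1} = \{A^{-1/2}B^{-1}A^{-1/2}\}$ and $(sA + (1-s)B)^{-1} = \{A^{-1/2}(sI + (1-s)\{A^{-1/2}BA^{-1/2}\})^{-1}A^{-1/2}\}$ that already appeared in the proofs of Theorem \ref{TJwgitg} and Proposition \ref{LJocanv}.

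First I would establish the scalar inequalities: for all $x > 0$ and $0 \le \lambda \le 1$,
\begin{align*}
\left((1-\lambda) + \lambda x^{-1}\right)^{-1} \le x^\lambda \le (1-\lambda) + \lambda x.
\end{align*}
The right inequality is the classical weighted AM-GM inequality for the two positive reals $1$ and $x$; the left follows from the right applied to $x^{-1}$ after taking reciprocals (reversing the inequality), namely $x^{-\lambda} \le (1-\lambda) + \lambda x^{-1}$. Both are elementary one-variable estimates, so this step is routine.

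Then I would apply Theorem \ref{tnapf} twice. Taking $r(x) = ((1-\lambda) + \lambda x^{-1})^{-1}$, $q(x) = x^\lambda$, and $h(t) = t$, the hypothesis $r(x) \le q(x)$ holds on $(0,\infty)$, so \eqref{inapf} gives $P_{r\triangle h}(B,A) \le P_{q\triangle h}(B,A)$; unwinding the perspective expressions via Proposition \ref{3inv} as above yields $A!_\lambda B \le A\#_\lambda B$. Similarly, taking $r(x) = x^\lambda$ and $q(x) = (1-\lambda) + \lambda x$ gives $A\#_\lambda B \le A\triangledown_\lambda B$. One must check that the spectral containment hypotheses of Theorem \ref{tnapf} are satisfied — since $A, B$ are positive invertible, $\{A^{-1/2}BA^{-1/2}\}$ is positive invertible by Proposition \ref{3inv}, so its spectrum lies in $(0,\infty)$, and all the functions involved are continuous there — so there is no obstruction. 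The cases $\lambda = 0$ and $\lambda = 1$ are trivial (all three means equal $A$ or $B$ respectively) and can be dispatched first.

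The main obstacle, such as it is, is purely bookkeeping: one must carefully verify that the perspective function $P_{f\triangle h}(B,A)$ with $h(t)=t$ really does reduce to the stated closed forms for each of $!_\lambda$, $\#_\lambda$, $\triangledown_\lambda$. For $\#_\lambda$ this is the definition; for $\triangledown_\lambda$ it uses linearity \eqref{LJI} of $U_{A^{1/2}}$ together with $\{A^{1/2}IA^{1/2}\} = A$ and $\{A^{1/2}\{A^{-1/2}BA^{-1/2}\}A^{1/2}\} = B$ (Proposition \ref{3inv}(3), $U_{A^{1/2}}U_{A^{-1/2}} = \mathrm{id}$); for $!_\lambda$ it uses Proposition \ref{3inv}(2) to pass the inverse through $U$ and then the same computation as in the proof of Theorem \ref{TJwgitg}. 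Once these three identifications are in place, the inequalities drop out of Theorem \ref{tnapf} with no further work. An alternative, slightly more self-contained route would be to avoid citing Theorem \ref{tnapf} and instead argue directly: apply the order-preserving functional calculus (cf. \cite{Alfsen2003}) to the scalar inequalities evaluated at the element $\{A^{-1/2}BA^{-1/2}\}$, then apply $U_{A^{1/2}}$ (which preserves order by Proposition \ref{3inv}(1) and \eqref{LJI}) to both sides; this yields the same conclusion and makes the role of each hypothesis transparent.
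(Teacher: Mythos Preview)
Your proposal is correct and follows essentially the same approach as the paper: identify each of the three means as a perspective function $P_{f\triangle h}(B,A)$ with $h(t)=t$ and $f$ equal to $((1-\lambda)+\lambda x^{-1})^{-1}$, $x^\lambda$, or $(1-\lambda)+\lambda x$, invoke the scalar chain of inequalities among these three functions, and then apply Theorem~\ref{tnapf}. Your write-up is in fact slightly more careful than the paper's in checking the spectral hypotheses and the boundary cases $\lambda\in\{0,1\}$, and the alternative direct route you sketch (functional calculus plus positivity of $U_{A^{1/2}}$) is exactly what underlies Theorem~\ref{tnapf} itself.
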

\begin{proof}
By Proposition \ref{3inv},  
we have 
\begin{align*}
A!_{\lambda} B
&=\left\{A^{\frac{1}{2}} \left((1-\lambda)1+\lambda \{A^{-\frac{1}{2}}BA^{-\frac{1}{2}}\}^{-1}\right)^{-1}A^{\frac{1}{2}} \right\},\\
A\triangledown_{\lambda} B
&=\left\{A^{\frac{1}{2}} \left((1-\lambda)+\lambda \{A^{-\frac{1}{2}}BA^{-\frac{1}{2}}\}\right)A^{\frac{1}{2}}\right\}.
\end{align*}
Let
\begin{align}
r(x)&=\left[(1-\lambda)1+\lambda x^{-1}\right]^{-1},\\
q(x)&=x^{\lambda},\\
k(x)&=(1-\lambda)1+\lambda x	.	
\end{align}
One sees that for any $0\leq \lambda \leq 1,$
\begin{align}\label{Jhgai2}
r(x)\leq q(x)\leq k(x).
\end{align}
hold for all $x>0.$ 
Denoting $h(t)=t,$ we have
\begin{align}
P_{r\triangle h}(B,A)&=\left\{A^{\frac{1}{2}} \left((1-\lambda)1+\lambda \{A^{-\frac{1}{2}}BA^{-\frac{1}{2}}\}^{-1}\right)^{-1}A^{\frac{1}{2}} \right\},\nonumber\\
P_{q\triangle h}(B,A)&=\left\{A^{\frac{1}{2}} \left(\{A^{-\frac{1}{2}}BA^{-\frac{1}{2}}\}\right)^{\lambda}A^{\frac{1}{2}} \right\},\label{JYoung}\\	
P_{k\triangle h}(B,A)&=\left\{A^{\frac{1}{2}} \left((1-\lambda)+\lambda \{A^{-\frac{1}{2}}BA^{-\frac{1}{2}}\}\right)A^{\frac{1}{2}}\right\}.\nonumber
\end{align}
Applying Theorem \ref{tnapf} to (\ref{JYoung}), we derive the inequalities
\begin{align*} 
A!_{\lambda} B
\leq A\#_{\lambda} B
\leq A\triangledown_{\lambda}B.	
\end{align*}	
\end{proof}
As an improvement of (\ref{LJwhga2}), we have the following refined Young inequalities, which has origin in \cite{furuichi2012refined} for operators on Hilbert space. 
\begin{proposition}\label{LJwhga3}
Let $A, B$ be positive invertible elements in $\Al.$ For any $0\leq \lambda \leq 1$ and $\delta=\min\{ \lambda, 1-\lambda\}$
\begin{align} \label{LJwhga4}
A!_{\lambda} B
&\leq \left[A^{-1}\#_{\lambda}B^{-1}+2\delta\left(\frac{A^{-1}+B^{-1}}{2}-A^{-1}\#_{1/2}B^{-1}\right)\right]^{-1} \\
&\leq A\#_{\lambda} B\\
&\leq A\#_{\lambda} B+2\delta\left(\frac{A+B}{2}-A\#_{1/2}B\right)\\
&\leq A\triangledown_{\lambda} B.	
\end{align}	
\end{proposition}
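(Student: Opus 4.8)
The plan is to reduce all four inequalities to a single one, namely the last,
\[A\#_{\lambda}B+2\delta\Big(\tfrac{A+B}{2}-A\#_{1/2}B\Big)\leq A\triangledown_{\lambda}B ,\]
and then deduce the other three from it together with facts already at hand: the unrefined Young inequality of Theorem~\ref{LJwhga} (in particular its $\lambda=\tfrac12$ case $A\#_{1/2}B\leq\tfrac{A+B}{2}$), the fact that $x\mapsto x^{-1}$ is operator monotone decreasing on $(0,\infty)$ (used already in the proof of Proposition~\ref{opmpowerlog}), and the inversion identity $(A\#_{\lambda}B)^{-1}=A^{-1}\#_{\lambda}B^{-1}$ from Proposition~\ref{pjwgn}(v).

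For the core inequality I would start from the scalar refined Young inequality underlying \cite{furuichi2012refined}: for all $x>0$ and $\delta=\min\{\lambda,1-\lambda\}$,
\[x^{\lambda}+\delta\big(1-x^{1/2}\big)^{2}\leq(1-\lambda)+\lambda x .\]
Set $h(t)=t$, $g(x)=x^{\lambda}+\delta(1-x^{1/2})^{2}$ and $k(x)=(1-\lambda)+\lambda x$; all are continuous on a closed interval containing $\Sp{\{A^{-1/2}BA^{-1/2}\}}\subset(0,\infty)$, and $g\leq k$ there. Theorem~\ref{tnapf} then yields $P_{g\triangle h}(B,A)\leq P_{k\triangle h}(B,A)$, and it remains to identify both sides. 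Writing $X=\{A^{-1/2}BA^{-1/2}\}$ and expanding, by functional calculus, $g(X)=X^{\lambda}+\delta\big(I-2X^{1/2}+X\big)$, the linearity of $U_{A^{1/2}}$ (equation~\eqref{LJI}) gives
\[P_{g\triangle h}(B,A)=U_{A^{1/2}}(X^{\lambda})+\delta\big(U_{A^{1/2}}(I)-2\,U_{A^{1/2}}(X^{1/2})+U_{A^{1/2}}(X)\big).\]
Here $U_{A^{1/2}}(X^{\lambda})=A\#_{\lambda}B$ and $U_{A^{1/2}}(X^{1/2})=A\#_{1/2}B$ by definition, $U_{A^{1/2}}(I)=A$, and $U_{A^{1/2}}(X)=U_{A^{1/2}}U_{A^{-1/2}}(B)=B$ by Proposition~\ref{3inv}; likewise $P_{k\triangle h}(B,A)=A\triangledown_{\lambda}B$. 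Collecting terms gives exactly the last inequality.

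The third inequality follows at once, since $\delta\geq0$ and $A\#_{1/2}B\leq\tfrac{A+B}{2}$ by Theorem~\ref{LJwhga}, so the correction term is positive. For the first two, apply the core inequality and the arithmetic--geometric inequality to the positive invertible pair $A^{-1},B^{-1}$: with $C:=A^{-1}\#_{\lambda}B^{-1}+2\delta\big(\tfrac{A^{-1}+B^{-1}}{2}-A^{-1}\#_{1/2}B^{-1}\big)$ one obtains $A^{-1}\#_{\lambda}B^{-1}\leq C\leq(1-\lambda)A^{-1}+\lambda B^{-1}$, so $C$ is in particular positive invertible. Inverting (which reverses order) and using Proposition~\ref{pjwgn}(v) turns this into $A!_{\lambda}B=\big((1-\lambda)A^{-1}+\lambda B^{-1}\big)^{-1}\leq C^{-1}\leq(A^{-1}\#_{\lambda}B^{-1})^{-1}=A\#_{\lambda}B$, which are the first and second inequalities. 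The main obstacle is the second paragraph: the scalar estimate is classical, but one must carry out the JB-algebra bookkeeping correctly --- applying the linear map $U_{A^{1/2}}$ term by term to $g(X)$ converts the scalar $(1-x^{1/2})^{2}$ into $A+B-2(A\#_{1/2}B)$, and one has to verify the endpoint evaluations $U_{A^{1/2}}(I)=A$ and $U_{A^{1/2}}(X)=B$; this is precisely where the inequality takes a form different from its Hilbert-space ancestor.
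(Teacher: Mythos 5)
Your proposal is correct and follows essentially the same route as the paper: the scalar Kittaneh--Manasrah refinement $x^{\lambda}+\delta(1-\sqrt{x})^{2}\leq(1-\lambda)+\lambda x$ (identical to the paper's $x^{\lambda}+2\delta(\tfrac{x+1}{2}-\sqrt{x})\leq(1-\lambda)+\lambda x$) is pushed through Theorem~\ref{tnapf} with $h(t)=t$ to get the last inequality, the third follows from $A\#_{1/2}B\leq\tfrac{A+B}{2}$, and the first two are obtained by applying the same estimates to $A^{-1},B^{-1}$ and inverting via Proposition~\ref{pjwgn}(v). Your explicit bookkeeping identifying $P_{g\triangle h}(B,A)$ term by term via the linearity of $U_{A^{1/2}}$ is just a more detailed rendering of the step the paper leaves implicit.
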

\begin{proof}
By Theorem \ref{LJwhga}, $\dfrac{A+B}{2}-A\#_{1/2}B\geq 0.$ This implies that 
$$A\#_{\lambda} B\leq A\#_{\lambda} B+2\delta\left(\frac{A+B}{2}-A\#_{1/2}B\right).$$
Similarly, 
\begin{align}\label{Jwgmie}
A^{-1}\#_{\lambda} B^{-1}\leq A^{-1}\#_{\lambda} B^{-1}+2\delta\left(\frac{A^{-1}+B^{-1}}{2}-A^{-1}\#_{1/2}B^{-1}\right).	
\end{align}

Taking inverses in  (\ref{Jwgmie}) and then applying Proposition \ref{pjwgn}.(v) gives 
$$\left[A^{-1}\#_{\lambda} B^{-1}+2\delta\left(\frac{A^{-1}+B^{-1}}{2}-A^{-1}\#_{1/2}B^{-1}\right)\right]^{-1}\leq A\#_{\lambda} B.$$
By \cite[Theorem 2.1]{KITTANEH2010262}, for $x\geq 0$ and $0\leq \lambda \leq 1,$
\begin{align}\label{hga1}
x^{\lambda}+ 2\delta \left( \dfrac{x+1}{2}- \sqrt{x} \right) \leq (1-\lambda)+\lambda x	.
\end{align}

Applying Theorem \ref{tnapf} to inequality (\ref{hga1}) with $h(t) = t$ gives
$$A\#_{\lambda} B+2\delta\left(\frac{A+B}{2}-A\#_{1/2}B\right) \leq A\triangledown_{\lambda} B.$$
Similarly,
\begin{align}
A^{-1}\#_{\lambda} B^{-1}+2\delta\left(\frac{A^{-1}+B^{-1}}{2}-A^{-1}\#_{1/2}B^{-1}\right) \leq A^{-1}\triangledown_{\lambda} B^{-1}.
\label{xdelta}
\end{align}
By definition,  
\begin{align*}
(A^{-1}\triangledown_{\lambda} B^{-1})^{-1}=A!_{\lambda} B,	
\end{align*}
which combined with (\ref{xdelta}) gives
$$A!_{\lambda} B
\leq \left[A^{-1}\#_{\lambda}B^{-1}+2\delta\left(\frac{A^{-1}+B^{-1}}{2}-A^{-1}\#_{1/2}B^{-1}\right)\right]^{-1}.$$
\end{proof}

Propositions \ref{KuboAndoT1} and 
\ref{KuboAndoT2} below have origins in the classical 
Kubo-Ando theory.  

\begin{proposition}
	\label{KuboAndoT1}
Let $0\leq \delta\leq 1$ and $A, B$ be positive invertible elements in $\Al$. For $0<\lambda<1,$ 
\begin{align}\label{Jhgai3}
\delta A\#_{\lambda}B+(1-\delta)A\triangledown_{\lambda}B
\geq A!_{\lambda}B.
\end{align}		
\end{proposition}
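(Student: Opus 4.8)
The plan is to reduce the claimed inequality to a scalar inequality and then invoke Theorem~\ref{tnapf} (the monotonicity of the nonassociative perspective in its function argument), exactly as in the proofs of Theorem~\ref{LJwhga} and Proposition~\ref{LJwhga3}. First I would recall from those proofs that, writing $h(t)=t$ and $X=\{A^{-1/2}BA^{-1/2}\}$, the three means transplant to the perspective picture as
\begin{align*}
A!_{\lambda}B&=\left\{A^{\frac{1}{2}}\bigl((1-\lambda)1+\lambda X^{-1}\bigr)^{-1}A^{\frac{1}{2}}\right\},\\
A\#_{\lambda}B&=\left\{A^{\frac{1}{2}}X^{\lambda}A^{\frac{1}{2}}\right\},\\
A\triangledown_{\lambda}B&=\left\{A^{\frac{1}{2}}\bigl((1-\lambda)1+\lambda X\bigr)A^{\frac{1}{2}}\right\},
\end{align*}
so that $\delta(A\#_{\lambda}B)+(1-\delta)(A\triangledown_{\lambda}B)=\left\{A^{\frac{1}{2}}\,g(X)\,A^{\frac{1}{2}}\right\}$ where $g(x)=\delta x^{\lambda}+(1-\delta)\bigl((1-\lambda)+\lambda x\bigr)$, using linearity of $U_{A^{1/2}}$ as recorded in (\ref{LJI}). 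This is the point where the reduction to a one-variable problem is complete.

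Next I would establish the scalar inequality
\begin{align*}
\delta x^{\lambda}+(1-\delta)\bigl((1-\lambda)+\lambda x\bigr)\;\geq\;\bigl((1-\lambda)+\lambda x^{-1}\bigr)^{-1}\qquad (x>0,\ 0\le\delta\le1,\ 0<\lambda<1),
\end{align*}
i.e. $g(x)\geq r(x)$ in the notation of Theorem~\ref{LJwhga}. The cleanest route is to note that the scalar Young inequalities $r(x)\leq x^{\lambda}\leq (1-\lambda)+\lambda x$ already give $g(x)$ as a convex combination of two quantities each $\geq r(x)$, hence $g(x)\geq r(x)$; no new computation beyond (\ref{Jhgai2}) is needed. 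Then applying Theorem~\ref{tnapf} with $r$ as above and $q=g$ (both continuous on $(0,\infty)$, and $r\le g$ pointwise) to the elements $B$ and $A$ yields
\begin{align*}
P_{r\triangle h}(B,A)\;\leq\;P_{g\triangle h}(B,A),
\end{align*}
which, after identifying $P_{r\triangle h}(B,A)=A!_{\lambda}B$ and $P_{g\triangle h}(B,A)=\delta A\#_{\lambda}B+(1-\delta)A\triangledown_{\lambda}B$, is exactly (\ref{Jhgai3}).

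There is essentially no hard analytic obstacle here; the only thing requiring care is bookkeeping, namely checking that the spectral hypotheses of Theorem~\ref{tnapf} are met—specifically that the spectrum of $\{h(A)^{-1/2}B h(A)^{-1/2}\}=X$ lies in an interval on which $r,g$ are both continuous, which holds since $X$ is positive invertible and both functions are continuous on $(0,\infty)$—and that the linearity of $U_{A^{1/2}}$ is used to split the convex combination inside the brace. I expect the main obstacle, such as it is, to be purely presentational: making sure the identification of $P_{g\triangle h}(B,A)$ with $\delta A\#_{\lambda}B+(1-\delta)A\triangledown_{\lambda}B$ is spelled out via (\ref{LJI}), since $g$ is a sum of a power function and an affine function and the perspective of a sum is the sum of the perspectives only because $U_{A^{1/2}}$ is linear. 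An alternative, slightly shorter proof would bypass the perspective formalism entirely: $\delta(A\#_{\lambda}B)+(1-\delta)(A\triangledown_{\lambda}B)\geq \delta(A!_{\lambda}B)+(1-\delta)(A!_{\lambda}B)=A!_{\lambda}B$, using the left-hand Young inequality $A!_{\lambda}B\le A\#_{\lambda}B$ from Theorem~\ref{LJwhga} twice (the second time via $A!_{\lambda}B\le A\#_{\lambda}B\le A\triangledown_{\lambda}B$); I would present this as the primary argument and remark that it is a direct consequence of Theorem~\ref{LJwhga}.
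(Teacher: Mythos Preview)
Your proposal is correct. The perspective-function route you outline first is exactly the paper's proof: the paper cites the scalar inequality $\delta x^{\lambda}+(1-\delta)[(1-\lambda)+\lambda x]\geq[(1-\lambda)+\lambda x^{-1}]^{-1}$ from \cite[Lemma~2.6]{Furuichi2014operator} and then applies Theorem~\ref{tnapf} with $h(t)=t$, whereas you derive that scalar inequality yourself as an immediate convex-combination consequence of~(\ref{Jhgai2}). Your alternative argument---taking a convex combination of the two operator inequalities $A!_{\lambda}B\le A\#_{\lambda}B$ and $A!_{\lambda}B\le A\triangledown_{\lambda}B$ already established in Theorem~\ref{LJwhga}---is genuinely shorter than the paper's and avoids re-invoking Theorem~\ref{tnapf}; it makes transparent that Proposition~\ref{KuboAndoT1} is a direct corollary of Theorem~\ref{LJwhga}. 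The paper's route has the minor advantage of setting up the exact same template used for the companion Proposition~\ref{KuboAndoT2}, where the scalar inequality goes the other way under additional hypotheses and no such convex-combination shortcut is available.
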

\begin{proof}
For all $x>0,$ the inequality 
\begin{align}\label{hgai3}
\delta x^{\lambda}+(1-\delta)[(1-\lambda)+\lambda x] \geq[(1-\lambda)+\lambda x^{-1}]^{-1}.
\end{align}
can be found in \cite[Lemma 2.6]{Furuichi2014operator}.
Using the perspective functions associated with these two functions in (\ref{hgai3}) with $h(t)=t,$ and applying Theorem \ref{tnapf},  the desired result follows.		
\end{proof}

\begin{proposition}
\label{KuboAndoT2}
Suppose $\delta\geq 2$ and $A, B$ are positive invertible elements in a unital JB-algebra $\Al$. If $0\leq \lambda\leq  \frac{1}{2}$ and $0< A\leq B,$ or $\frac{1}{2}\leq \lambda\leq 1 $ and $B\leq A,$ then
\begin{align}\label{Jhgai2}
\delta A\#_{\lambda}B+(1-\delta)A\triangledown_{\lambda}B
\leq A!_{\lambda}B.
\end{align}	
\end{proposition}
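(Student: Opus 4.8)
The plan is to follow the template of Theorem~\ref{LJwhga}: reduce the assertion to a one‑variable inequality and then transport it through the operator $U_{A^{1/2}}$, the new ingredient being that the order hypothesis on $A,B$ confines the spectrum of $\{A^{-1/2}BA^{-1/2}\}$ to a half‑line, so the scalar inequality only needs to hold there. First I would observe that the two cases of the statement are interchanged by the relations $A\#_\lambda B=B\#_{1-\lambda}A$ (Corollary~\ref{Jgmasym}), $A\triangledown_\lambda B=B\triangledown_{1-\lambda}A$ and $A!_\lambda B=B!_{1-\lambda}A$, so it suffices to treat the case $0\le\lambda\le\frac{1}{2}$ together with $A\le B$ (the endpoint $\lambda=0$, where all three means equal $A$, being trivial). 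Put $X=\{A^{-1/2}BA^{-1/2}\}$. By Proposition~\ref{3inv} the map $U_{A^{-1/2}}$ is positive and linear, and $U_{A^{-1/2}}(A)=I$ because $U_{A^{1/2}}(I)=\{A^{1/2}IA^{1/2}\}=(A^{1/2})^{2}=A$ and $U_{A^{-1/2}}=U_{A^{1/2}}^{-1}$; hence $A\le B$ gives $X\ge I$, i.e.\ $\Sp{X}\subseteq[1,\infty)$.

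The crux is the scalar inequality
\begin{align*}
\delta\,x^{\lambda}+(1-\delta)\bigl[(1-\lambda)+\lambda x\bigr]\;\le\;\bigl[(1-\lambda)+\lambda x^{-1}\bigr]^{-1},\qquad \delta\ge 2,\ \ 0\le\lambda\le\tfrac{1}{2},\ \ x\ge 1 .
\end{align*}
This is the reverse, restricted to $[1,\infty)$, of the inequality invoked in Proposition~\ref{KuboAndoT1}; it can be taken from the literature on Kubo--Ando/Kittaneh‑type reverses (cf.\ \cite[Lemma~2.6]{Furuichi2014operator}), or checked directly. For a direct check, note that by the scalar Young inequality $x^{\lambda}\le(1-\lambda)+\lambda x$, so the left‑hand side is nonincreasing in $\delta$ and it is enough to settle the case $\delta=2$; multiplying through by the positive factor $(1-\lambda)+\lambda x^{-1}$ then leaves a single elementary inequality in the variable $x\ge 1$.

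Granting the scalar inequality, order‑preserving functional calculus applied to $X$ (whose spectrum lies in $[1,\infty)$) yields
\begin{align*}
\delta\,X^{\lambda}+(1-\delta)\bigl[(1-\lambda)I+\lambda X\bigr]\;\le\;\bigl[(1-\lambda)I+\lambda X^{-1}\bigr]^{-1}.
\end{align*}
I would then apply the positive linear map $U_{A^{1/2}}$ (Proposition~\ref{3inv}) to both sides and identify the three resulting terms exactly as in the proof of Theorem~\ref{LJwhga}: $\{A^{1/2}X^{\lambda}A^{1/2}\}=A\#_\lambda B$ by definition; $\{A^{1/2}[(1-\lambda)I+\lambda X]A^{1/2}\}=(1-\lambda)A+\lambda B=A\triangledown_\lambda B$, using $\{A^{1/2}IA^{1/2}\}=A$ and $\{A^{1/2}XA^{1/2}\}=U_{A^{1/2}}U_{A^{-1/2}}(B)=B$; and $\{A^{1/2}[(1-\lambda)I+\lambda X^{-1}]^{-1}A^{1/2}\}=A!_\lambda B$. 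This gives $\delta A\#_\lambda B+(1-\delta)A\triangledown_\lambda B\le A!_\lambda B$, and the case $\frac{1}{2}\le\lambda\le1$, $B\le A$ then follows from the symmetry noted at the outset.

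The main obstacle is the scalar inequality: at $x=1$ — that is, at $X=I$, which corresponds to $A=B$ — all three operator means coincide, so the inequality is an equality there and is numerically tight for $x$ near $1$. Consequently the direct verification at $\delta=2$, though elementary, requires some care (or one simply quotes it); once it is in hand, the rest is a routine transport through $U_{A^{1/2}}$ and functional calculus, strictly parallel to the proof of Theorem~\ref{LJwhga}.
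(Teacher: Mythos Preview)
Your proposal is correct and follows essentially the same route as the paper: reduce to the scalar inequality $\delta x^{\lambda}+(1-\delta)[(1-\lambda)+\lambda x]\le[(1-\lambda)+\lambda x^{-1}]^{-1}$ and transport it via $U_{A^{1/2}}$, which is exactly what the paper does by invoking Theorem~\ref{tnapf} with $h(t)=t$. Your write-up is in fact more careful than the paper's three-line proof, since you make explicit why the order hypothesis $A\le B$ forces $\Sp{\{A^{-1/2}BA^{-1/2}\}}\subset[1,\infty)$, and you spell out the symmetry reduction to a single case; the paper leaves both of these implicit in its appeal to Theorem~\ref{tnapf}. One small correction: the relevant scalar lemma in \cite{Furuichi2014operator} is Lemma~2.5, not Lemma~2.6 (the latter is the one used in Proposition~\ref{KuboAndoT1} for the opposite inequality with $0\le\delta\le1$).
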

\begin{proof}
From Lemma 2.5 \cite{Furuichi2014operator} , we know that 
\begin{align}\label{hgai2}
\delta x^{\lambda}+(1-\delta)[(1- \lambda)+ \lambda x] 
\leq[(1-\lambda)+\lambda x^{-1}]^{-1}.
\end{align}	
Appling Theoren \ref{tnapf} to the inequalities (\ref{hgai2}) with $h(t)=t$ gives (\ref{Jhgai2}).
\end{proof}

For any positive number $x\in [\frac{\alpha}{\beta}, \frac{\beta}	{\alpha}]$  and $\lambda \in[0,1],$ we have the following inequalities  
\begin{align}\label{Spri}
x^{\lambda} \leq 
(1-\lambda) +\lambda x\leq S(x) x^{\lambda},	
\end{align}
where 
 $S(x)$ is Specht's ratio (see e.g. \cite[Lemma 2.3]{tominaga2002specht}) 
 and its graph is like a parabola with minimum value $S(1)=1$. 
By \cite[Lemma 1]{furuichi2012refined}, 
\begin{align}\label{mvspr}
\sup_{x\in [\frac{\alpha}{\beta},\frac{\beta}{\alpha}]}S(x)
=\max\left\{S \left( \frac{\alpha}{\beta} \right), 
S \left( \frac{\beta}{\alpha} \right) \right\}= S \left( \frac{\beta}{\alpha} \right).	
\end{align}

The following result is a refined Young inequalities with Specht's ratio in the setting of JB-algebras.

\begin{proposition}
Let $A, B$ be positive invertible elements in a unital JB-algebra such that for 
positive numbers $\alpha$ and $\beta$, $\alpha \leq A \leq \beta$ and $\alpha \leq B\leq \beta.$ 
Then 
\begin{align*}
A\#_{\lambda} B\leq A\triangledown_{\lambda} B\leq S \left( \frac{\beta}{\alpha} \right) A\#_{\lambda} B	
\end{align*}
hold for any $\lambda\in[0,1].$	
\end{proposition}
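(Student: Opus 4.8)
The plan is to deduce this from the pointwise scalar inequality (\ref{Spri}) together with the uniform bound (\ref{mvspr}) and the functional-calculus machinery encoded in Theorem \ref{tnapf}. First I would observe that since $\alpha \leq A \leq \beta$ and $\alpha \leq B \leq \beta$, the element $X := \{A^{-1/2} B A^{-1/2}\}$ is positive and invertible, and its spectrum is contained in the interval $[\alpha/\beta,\, \beta/\alpha]$. This is the key spectral localization step: from $\alpha I \leq B \leq \beta I$ and Proposition \ref{3inv}(1) applied to $A^{-1/2}$ we get $\alpha \{A^{-1/2} I A^{-1/2}\} = \alpha A^{-1} \leq X \leq \beta A^{-1}$, and then $\alpha/\beta \leq \alpha A^{-1}$ and $\beta A^{-1} \leq \beta/\alpha$ follow from $\alpha I \leq A \leq \beta I$ inverted (using that $x \mapsto x^{-1}$ is operator monotone decreasing, cf. Proposition \ref{opmpowerlog} style reasoning); hence $\Sp{X} \subset [\alpha/\beta, \beta/\alpha]$.

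Next, on the interval $\Idb := [\alpha/\beta,\, \beta/\alpha]$, inequality (\ref{Spri}) gives the two scalar inequalities $x^\lambda \leq (1-\lambda) + \lambda x$ and $(1-\lambda) + \lambda x \leq S(x)\, x^\lambda \leq S(\beta/\alpha)\, x^\lambda$ for all $x$ in $\Idb$, where the last step uses (\ref{mvspr}). So on $\Idb$ we have, as continuous functions, $q(x) := x^\lambda$, $k(x) := (1-\lambda) + \lambda x$, and $c(x) := S(\beta/\alpha)\, x^\lambda$ satisfying $q \leq k \leq c$ pointwise. Then I would apply Theorem \ref{tnapf} twice with $h(t) = t$: once to $q \leq k$ and once to $k \leq c$. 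As in the proof of Theorem \ref{LJwhga}, one identifies $P_{q \triangle h}(B, A) = \{A^{1/2} X^\lambda A^{1/2}\} = A\#_\lambda B$ and $P_{k \triangle h}(B, A) = \{A^{1/2}((1-\lambda) + \lambda X) A^{1/2}\} = A \triangledown_\lambda B$, while $P_{c \triangle h}(B, A) = S(\beta/\alpha)\, \{A^{1/2} X^\lambda A^{1/2}\} = S(\beta/\alpha)\, (A\#_\lambda B)$ since the perspective construction is linear in the scalar multiple coming from $c = S(\beta/\alpha)\, q$. Stringing these together yields
\begin{align*}
A\#_\lambda B \leq A \triangledown_\lambda B \leq S \left( \frac{\beta}{\alpha} \right) A\#_\lambda B,
\end{align*}
as desired.

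I would also handle the edge cases $\lambda = 0$ and $\lambda = 1$ separately (where all three means degenerate to $A$ or $B$ and $S(\beta/\alpha) \geq 1$ makes the inequality trivial), and note that the spectra hypotheses needed to even form the perspective functions $P_{q\triangle h}(B,A)$ etc.\ are exactly the statement $\Sp{X} \subset \Idb$ established in the first step. The main obstacle I anticipate is the spectral localization $\Sp{\{A^{-1/2}BA^{-1/2}\}} \subset [\alpha/\beta, \beta/\alpha]$: in a nonassociative JB-algebra one cannot simply write $A^{-1/2}BA^{-1/2}$ and manipulate it, so this requires careful use of the operator-monotonicity results (Propositions \ref{LJocanv}, \ref{opmpowerlog}) and the positivity property of $U_A$ in Proposition \ref{3inv}(1), rather than the naive associative computation. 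Everything after that is a direct invocation of Theorem \ref{tnapf} exactly in the pattern already used for the Young inequalities.
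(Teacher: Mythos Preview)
Your proposal is correct and follows essentially the same route as the paper: establish $\Sp{\{A^{-1/2}BA^{-1/2}\}}\subset[\alpha/\beta,\beta/\alpha]$, combine the scalar inequality (\ref{Spri}) with the bound (\ref{mvspr}) to get $x^{\lambda}\leq(1-\lambda)+\lambda x\leq S(\beta/\alpha)\,x^{\lambda}$ on that interval, and then apply Theorem~\ref{tnapf} with $h(t)=t$. The only cosmetic difference is in the spectral localization: the paper obtains the upper bound on $X=\{A^{-1/2}BA^{-1/2}\}$ by bounding $X^{-1}=\{A^{1/2}B^{-1}A^{1/2}\}$ from below via $U_{A^{1/2}}(B^{-1})\geq U_{A^{1/2}}(\beta^{-1}I)=\beta^{-1}A\geq(\alpha/\beta)I$, whereas you go directly through $X\leq U_{A^{-1/2}}(\beta I)=\beta A^{-1}\leq(\beta/\alpha)I$; both are valid uses of Proposition~\ref{3inv}(1) together with the operator-monotone-decreasing property of $x\mapsto x^{-1}$.
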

\begin{proof}
According to \cite[Lemma 3.5.3]{hanche1984jordan}, 
$$\beta^{-1}\leq  A^{-1} \leq \alpha^{-1} \; \; \text{and} \; \;  
\beta^{-1}\leq B^{-1}\leq \alpha^{-1}.$$
 Moreover,
\begin{align}
&\frac{\alpha}{\beta}I \leq \alpha A^{-1}=U_{{A^{-\frac{1}{2}}}}(\alpha I) \leq U_{{A^{-\frac{1}{2}}}}(B)=\{A^{-\frac{1}{2}} B A^{-\frac{1}{2}}\} \label{JBi1},	\\
&\frac{\alpha}{\beta}I\leq \beta^{-1}A=U_{{A^{\frac{1}{2}}}}(\beta^{-1} I)\leq U_{{A^{\frac{1}{2}}}}(B^{-1})=\{A^{-\frac{1}{2}} B A^{-\frac{1}{2}}\}^{-1}. \label{JBi2}
\end{align}
Combining (\ref{JBi1}) and (\ref{JBi2}), we have
\begin{align}
\frac{\alpha}{\beta}I\leq \{A^{-\frac{1}{2}} B A^{-\frac{1}{2}}\} \leq \frac{\beta}	{\alpha}I.
\end{align}

Since $S(x) x^{\lambda} \leq S \left( \frac{\beta}{\alpha} \right)  x^{\lambda}$,
it follows from (\ref{Spri}) and (\ref{mvspr}) that
\begin{align}\label{Spri1}
x^{\lambda}\leq (1-\lambda) +\lambda x\leq S \left( \frac{\beta}{\alpha} \right) x^{\lambda}.
\end{align}
Applying Theorem \ref{tnapf} with $h(t) =t$ to (\ref{Spri1}), the desired inequalities follow.
\end{proof}

\bigskip
\noindent 
{\bf Acknowledgements} We are grateful to all the referees for their careful readings of 
our manuscript, comments,  and suggestions. These greatly helped to improve the quality and 
readability of the paper. 
We are especially indebted to the referee who pointed out the inadequacy of assumptions  
for Lemmas 1 and 2 in an early version of the manuscript.

%
%
%
%


\begin{thebibliography}{99} 
\bibitem{Alfsen2003}
 E.~M.~Alfsen and F.~W.~Shultz,
 {\em Geometry of state spaces of operator algebras,}
 Mathematics: Theory \& Applications.
 Birkh{\" a}user Boston, Inc., Boston, MA, 2003. 
 
 \bibitem{ALFSEN197811}
 E.~M.~Alfsen, F.~W.~Shultz and E. St{\o}rmer,
 {\em A Gelfand-Neumark theorem for Jordan algebras,}
 Advances in Math.\ {\bf 28} 
 (1978), no. 1, 11–-56. 
 
 \bibitem{ANDERSON1969576}
W.~N.~Anderson Jr and R.~J.~Duffin,
{\em Series and parallel addition of matrices,}
J.\ Math.\ Anal.\ Appl.\ {\bf 26}
(1969), 576–-594.

\bibitem{ANDERSON197560}
 W.~N.~Anderson Jr and  G.~E.~Trapp,
{\em Shorted operators. {\rm II},}
SIAM J.\ Appl.\ Math. {\bf 28}
(1975), 60–-71.  

\bibitem{Ando1978topics}
T. Ando,
{\em Topics on operator inequalities,}
Division of Applied Mathematics, Research Institute of Applied Electricity, Hokkaido University, Sapporo, 1978. 

\bibitem{ANDO1979203}
T. Ando,
{\em Concavity of certain maps on positive definite matrices and applications to Hadamard products,}
 Linear Algebra Appl.\ {\bf 26}
 (1979), 203–-241. 


\bibitem{ANDO198331}
T. Ando,
{\em On the arithmetic-geometric-harmonic-mean inequalities for positive definite matrices,}
 Linear Algebra Appl.\ {\bf 52/53}
 (1983), 31–-37.
 
 \bibitem{Chu}
  C. Chu, 
  {\em Jordan structures in geometry and analysis,}
  Cambridge Tracts in Mathematics, {\bf 190}. 
  Cambridge University Press, Cambridge, 2012. x+261 pp.
 
\bibitem{ebadian2011perspectives}
A. Ebadian,  I. Nikoufar and M.  Eshaghi Gordji,
{\em Perspectives of matrix convex functions,}
Proc.\ Natl.\ Acad.\ Sci.\ USA {\bf 108}
(2011), no. 18, 7313-–7314.

\bibitem{effros2009matrix} 
 E.~G.~Effros,
{\em A matrix convexity approach to some celebrated quantum inequalities,}
 Proc.\ Natl.\ Acad.\ Sci.\ USA {\bf 106}
 (2009), no. 4, 1006-–1008.  
 
 
 \bibitem{effros1967jordan}
 E.~G.~Effros and E. St{\o}rmer,
 {\em Jordan algebras of self-adjoint operators,}
 Trans.\ Amer.\ Math.\ Soc.\ {\bf 127} (1967), 313-–316. 
 
 \bibitem{fujii1978arithmetico}
J. Fujii,
 {\em Arithmetic-geometric mean of operators, }
 Math.\ Japon.\ {\bf 23}
 (1978/79), no. 6, 667-–669.  
 
 \bibitem{fujii1979geometric}
 J. Fujii,
 {\em On geometric and harmonic means of positive operators,}
 Math.\ Japon.\ {\bf 24}
 (1979/80), no. 2, 203-–207.
 
 \bibitem{fujii1979some}
 J. Fujii and M. Fujii,
 {\em Some remarks on operator means,} Math.\ Japon.\ {\bf 24} (1979/80), no. 4, 335-–339. 
 
 \bibitem{furuichi2012refined}
S. Furuichi,  
 {\em Refined Young inequalities with Specht's ratio,}
 J.\ Egyptian Math.\ Soc.\ {\bf 20}
  (2012), no. 1, 46-–49.
 
\bibitem{Furuichi2014operator}
 S. Furuichi, 
 {\em Operator inequalities among arithmetic mean, geometric mean and harmonic mean,}
  J.\ Math.\ Inequal.\ {\bf 8}
  (2014), no. 3, 669-–672.
 
 \bibitem{hanche1984jordan}
H. Hanche-Olsen and E. St{\o}rmer,  {\em Jordan operator algebras,} Monographs and Studies in Mathematics, {\bf 21}. 
Pitman (Advanced Publishing Program), Boston, MA, 1984.

\bibitem{Jordan1934}
P. Jordan, J. von Neumann and E. Wigner,
{\em On an algebraic generalization of the quantum mechanical formalism,}
Ann.\ of Math.\ {\bf (2) 35}
(1934), no. 1, 29-–64.  
 
 \bibitem{KITTANEH2010262}
F. Kittaneh and Y. Manasrah, 
{\em Improved Young and Heinz inequalities for matrices,} 
J.\ Math.\ Anal.\ Appl.\ {\bf 361} (2010), no. 1, 262–-269.  

\bibitem{Kubo1979}
 F. Kubo and  T. Ando, 
 {\em Means of positive linear operators,}
 Math.\ Ann.\ {\bf 246}
 (1979/80), no. 3, 205–-224.
 

 
\bibitem{LIEB1973267}
E. H. Lieb, 
{\em Convex trace functions and the Wigner-Yanase-Dyson conjecture,} 
Advances in Math.\ {\bf 11}
(1973), 267–-288.
 
\bibitem{Lieb19731938}
E. H. Lieb and M. B. Ruskai, 
{\em Proof of the strong subadditivity of quantum-mechanical entropy} With an appendix by B. Simon,
 J.\ Mathematical Phys.\ {\bf 14}
 (1973), 1938–-1941. 

\bibitem{NEAL2000284}
M. Neal,
{\em Inner ideals and facial structure of the quasi-state space of a JB-algebra,}  
J.\ Funct.\ Anal.\ {\bf 173}
(2000), no. 2, 284–-307.


\bibitem{Neumann1936algebraic}
J. von Neumann,
{\em On an algebraic generalization of the quantum mechanical formalism \text{Part I},}
Mat.\ Sbornik {\bf 1} (1936), no. 4, 415--484.

\bibitem{PUSZ1975159}
 W. Pusz and S. L. Woronowicz,
{\em Functional calculus for sesquilinear forms and the purification map,}
Rep.\ Mathematical Phys.\ {\bf 8} 
(1975), no. 2, 159–-170. 

\bibitem{RAISSOULI2017687}
M. Ra{\" i}ssouli, M. S. Moslehian and S. Furuichi,
{\em Relative entropy and Tsallis entropy of two accretive operators,}
C.\ R.\ Math.\ Acad.\ Sci.\ Paris {\bf 355} 
(2017), no. 6, 687–-693. 

\bibitem{Segal1947}
 I. E. Segal, 
{\em Postulates for general quantum mechanics,}
 Ann.\ of Math.\ {\bf 48} (1947), 930-–948. 

\bibitem{stormer1965}
E. St{\o}rmer, 
 {\em On the Jordan structure of C*-algebras,} 
 Trans.\ Amer.\ Math.\ Soc.\ {\bf 120} (1965), 438-–447.

\bibitem{stormer1966}
E. St{\o}rmer,
{\em Jordan algebras of type {\rm I},}
 Acta Math.\ {\bf 115} 
 (1966), 165-–184.

\bibitem{stormer1968}
E. St{\o}rmer,
{\em Irreducible Jordan algebras of self-adjoint operators,}
Trans.\ Amer.\ Math.\ Soc.\ {\bf 130} (1968), 153–-166. 

\bibitem{tominaga2002specht}
M. Tominaga,
{\em Specht's ratio in the Young inequality,}
 Sci.\ Math.\ Jpn.\ {\bf 55}
 (2002), no. 3, 583-–588.

\bibitem{topping1965jordan}
D. M. Topping,
{\em Jordan algebras of self-adjoint operators,} 
Mem.\ Amer.\ Math.\ Soc.\ {\bf 53}
(1965), 48 pp.

\bibitem{Upmeier}
H. Upmeier,
{\em Jordan algebras in analysis, operator theory, and quantum mechanics,} 
CBMS Regional Conference Series in Mathematics, {\bf 67}. 
Published for the Conference Board of the Mathematical Sciences, Washington, DC; by the American Mathematical Society, Providence, RI, 1987. viii+85 pp. 

\bibitem{Upmeier1}
H. Upmeier,
{\em Symmetric Banach manifolds and Jordan C*-algebras,} 
North-Holland Mathematics Studies, {\bf 104}. Notas de Matem{\'a}tica [Mathematical Notes], 96. North-Holland Publishing Co., Amsterdam, 1985. xii+444 pp.
\bibitem{Wang2020}
S. Wang and Z. Wang,
{\em Refined operator inequalities for relative operator entropies,} preprint 2020.


\end{thebibliography}
\end{document}